 \tikzset{mynode/.style={draw,circle,inner sep=1pt,outer sep=0pt}}
\newtheorem{teo}{Theorem}[section]
\newtheorem{cor}[teo]{Corollary}
\newtheorem{lem}[teo]{Lemma}
\newtheorem{defi}[teo]{Definition}
\newtheorem{remark}[teo]{Remark}
\newcommand{\X}{\ensuremath{\mathbb{X}}}
\dedicatory{Dedicated to the memory of V$\check{e}$ra Trnkov\'a }
\begin{document}

\title{SPLIT EXTENSIONS AND SEMIDIRECT PRODUCTS OF UNITARY MAGMAS}

\author{Marino Gran}
\address[Marino Gran]{Institut de Recherche en Math\'ematique et Physique, Universit\'e catholique de Louvain, Chemin du Cyclotron 2, 1348 Louvain-la-Neuve, Belgium}
\thanks{}
\email{marino.gran@uclouvain.be}

\author{George Janelidze}
\address[George Janelidze]{Department of Mathematics and Applied Mathematics, University of Cape Town, Rondebosch 7700, South Africa}
\thanks{Partially supported by South African NRF}
\email{george.janelidze@uct.ac.za}

\author{Manuela Sobral}
\address[Manuela Sobral]{CMUC and Departamento de
Matem\'atica, Universidade de Coimbra, 3001--501 Coimbra,
Portugal}
\thanks{
Partially supported by the Centre for Mathematics of
the University of Coimbra -- UID/MAT/00324/2019}
\email{sobral@mat.uc.pt}

\keywords{unitary magma, split extension, firm split extension, semidirect product}

\subjclass[2010]{20N02, 08C05, 18G50}

\begin{abstract}
We develop a theory of split extensions of unitary magmas, which includes defining such extensions and describing them via suitably defined semidirect product, yielding an equivalence between the categories of split extensions and of (suitably defined) actions of unitary magmas on unitary magmas. The class of split extensions is pullback stable but not closed under composition. We introduce two subclasses of it that have both of these properties.
\end{abstract}
\date{\today}
\maketitle

\section*{Introduction}

As one knows from any first course of group theory, every split epimorphism of groups is, up to isomorphism, a semidirect product projection. A more refined categorical formulation says that there is a category equivalence between:
\begin{itemize}
	\item the category of split extensions
	\begin{equation*}\label{split-extension}
	\xymatrix{X  \ar[r]^-{\kappa}   & A  \ar[r]_-{\alpha} & B \ar@<-1ex>[l]_-{\beta}}
	\end{equation*}
	of groups, and
	\item the category of group actions.
\end{itemize}
Here a split extension is a diagram above in the category of groups with $\alpha\beta=1$ and $\kappa$ being a kernel of $\alpha$, while a group action is a triple $(B,X,h)$, in which $B$ is a group acting on a group $X$, and $h:B\times X\to X$ is its action. Under this equivalence, an action $(B,X,h)$ corresponds to the split extension
\begin{equation*}\label{semidirect}
\xymatrix{X  \ar[r]^-{\iota_1}   & X \rtimes B \ar[r]_-{\pi_2} & B \ar@<-1ex>[l]_-{\iota_2},}
\end{equation*}
involving the semidirect product $X \rtimes B$, in which $\iota_1$, $\iota_2$, and $\pi_2$ are defined by $\iota_1(x)=(x,0)$, $\iota_2(b)=(0,b)$, and $\pi_2(x,b)=b$, respectively (in additive notation).\\

\textit{We shall refer to this category equivalence as the theory of split extensions of groups.}\\

Based on Bourn's theory of \textit{protomodular categories} (see \cite{[B]} and \cite{[BB]}) and the theory of monads, the theory of split extensions was extended to the context of abstract \textit{semi-abelian categories} in the sense of \cite{[JMT]} (see \cite{[BJ]} and \cite{[BJK]}). This widely generalizes not only the group case but also more general algebraic cases considered by Orzech \cite{[O]} and Porter \cite{[P]}.

Going beyond the semi-abelian context is also possible, but then split extensions should be defined differently, involving an additional structure and properties. Specifically, in the case of monoids (see \cite{[BMMS]} and references therein), a split extension (called a \textit{Schreier split extension}) should be defined as a diagram
\begin{equation*}\label{split-extension}
\xymatrix{X  \ar[r]_-{\kappa}   & A  \ar[r]_-{\alpha} \ar@<-1ex>[l]_-{\lambda} & B \ar@<-1ex>[l]_-{\beta},}
\end{equation*}
in which  $\alpha$, $\beta$, and $\kappa$ are as before, and $\lambda$ is a map (not necessarily a monoid homomorphism) with $\kappa\lambda+\beta\alpha=1$ and $\lambda(\kappa(x)+\beta(b))=x$, for all $x\in X$ and $b\in B$ (using additive notation again). The context of monoids is not more general than the semi-abelian one of course, but it easily extends to cover the situations considered in \cite{[O]} and \cite{[P]} (see \cite{[MMS]}). Note that Schreier split extensions of monoids had implicitly been used in the work of A. Patchkoria on Schreier internal categories in monoids \cite{[AP]}. \\

The purpose of the present paper is to develop another generalization of the theory of split extensions, namely from monoids to unitary magmas, that is, to algebraic structures of the form $M=(M,0,+)$, where the only axiom required is $0+x=x=x+0$ for all $x\in M$. It turns out that this is indeed possible (see Theorem 2.8) with various surprising and less surprising additional observations, some of which are:
\begin{itemize}
	\item Defining a split extension one needs not just to involve a map denoted by $\lambda$ above, but also to require unusual new conditions of partial associativity. These conditions are harmless in the sense that every two unitary magmas $B$ and $X$ admit a split extension involving them. These conditions are the equalities (15)-(17) in Definition 1.4 (although (15) and (17) in fact imply (16)).
	\item On the other hand, defining an action of $B$ on $X$ we do not require any properties involving addition (see Definition 1.1). Still, requiring such properties would be natural, and they give a nicer classes of split extensions/epimorphisms (see Definition 3.5, Corollary 3.6, Theorem 3.7, and Subsection 4.3).
	\item The above-mentioned ``nicer classes'' are not only pullback stable in the class of all split extensions, but also closed under composition (see Subsection 4.3 again).
	\item Just as for groups and monoids, all our definitions, constructions, and results can be copied for internal unitary magmas in abstract categories (see Subsection 4.6).
	\item Just as for monoids, our split extensions satisfy some forms of Short Five Lemma (see Subsection 4.4), but not the strongest form valid for groups (see 4.4(c)), since we are generalizing the monoid case.
\end{itemize}

\textit{\textbf{Convention}: All magmas we consider below are supposed to be unitary and all maps between them are supposed to preserve zero, but not necessarily addition, unless stated otherwise.}

\section{Introducing semidirect products and split extensions}

\begin{defi}
	Let $B$ and $X$ be magmas. A map $B\times X\to X$ written as $(b,x)\mapsto bx$ is said to be an action of $B$ on $X$ if
	\begin{equation}
	0x=x,\,\,b0=0,
	\end{equation}
	for all $x\in X$ and $b\in B$.
\end{defi}
\begin{defi}
	For magmas $B$ and $X$ and an action of $B$ on $X$, the semidirect product diagram is the diagram
	\begin{equation}\label{semidirect}
	\xymatrix{X  \ar[r]_-{\iota_1}   & X \rtimes B \ar[r]_-{\pi_2} \ar@<-1ex>[l]_-{\pi_1} & B \ar@<-1ex>[l]_-{\iota_2}
	}
	\end{equation}
	in which:
	\begin{itemize}
		\item [(a)] $X \rtimes B$ is a magma whose underlying set is $X\times B$ and whose addition is defined by $(x,b)+(x',b')=(x+bx',b+b')$ (thus making $(0,0)$ its zero);
		\item [(b)] the maps involved are defined by $\iota_1(x)=(x,0)$, $\iota_2(b)=(0,b)$, $\pi_1(x,b)=x$, and $\pi_2(x,b)=b$.
	\end{itemize}
\end{defi}
\begin{lem}
	The semidirect product diagram $(2)$ satisfies the following conditions:
	\begin{itemize}
		\item [(a)] all maps involved in it preserve zero, and, moreover, the maps $\iota_1$, $\iota_2$, and $\pi_2$ are magma homomorphisms;
		\item [(b)] the equalities
		\begin{equation}\label{identity}
		\pi_1\iota_1=1,\,\,\pi_2\iota_2=1,
		\end{equation}
		\begin{equation}\label{0-map}
		\pi_1\iota_2=0,\,\,\pi_2\iota_1=0,
		\end{equation}
		\begin{equation}\label{biproduct}
		\iota_1\pi_1+\iota_2\pi_2=1,
		\end{equation}
		\begin{equation}\label{non-dist}
		\pi_1(\iota_1(x)+\iota_2(b))=x,
		\end{equation}
		\begin{equation}\label{left-m}
		(x,0)+((0,b)+(x',b'))=((x,0)+(0,b))+(x',b'),
		\end{equation}
		\begin{equation}\label{mid-m}
		(x,0)+((x',b')+(0,b))=((x,0)+(x',b'))+(0,b),
		\end{equation}
		\begin{equation}\label{right-m}
		(x',b')+((x,0)+(0,b))=((x',b')+(x,0))+(0,b)
		\end{equation}
		hold for all $x,x'\in X$ and $b,b'\in B$. \qed
	\end{itemize}
\end{lem}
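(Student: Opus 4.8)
The plan is to verify every assertion by direct computation from the definition of $X\rtimes B$ in Definition 1.2(a)--(b) together with the two action axioms $0x=x$ and $b0=0$ of (1); no further structure is available, and none is needed.

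For part (a), zero-preservation is immediate by inspection of the four formulas in Definition 1.2(b), since $\iota_1(0)=(0,0)$, $\iota_2(0)=(0,0)$, $\pi_1(0,0)=0$, and $\pi_2(0,0)=0$. For the homomorphism claims I would compute $\iota_1(x)+\iota_1(x')=(x,0)+(x',0)=(x+0x',0)$ and invoke $0x'=x'$ to obtain $(x+x',0)=\iota_1(x+x')$; symmetrically $\iota_2(b)+\iota_2(b')=(0,b)+(0,b')=(0+b0,\,b+b')$ and $b0=0$ gives $(0,b+b')=\iota_2(b+b')$; while $\pi_2\big((x,b)+(x',b')\big)=\pi_2(x+bx',\,b+b')=b+b'$ uses neither axiom, only the definition of addition on the second coordinate. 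It is worth noting here that $\pi_1$ is deliberately \emph{not} asserted to be a homomorphism: $\pi_1\big((x,b)+(x',b')\big)=x+bx'$ differs in general from $x+x'=\pi_1(x,b)+\pi_1(x',b')$.

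For part (b), the equalities (3) and (4) are read off at once from the injection/projection formulas. The one preliminary computation I would isolate is $(x,0)+(0,b)=(x+0{\cdot}0,\,0+b)=(x,b)$, which uses the special case $0{\cdot}0=0$ of $0x=x$; granting it, (5) follows from $(\iota_1\pi_1+\iota_2\pi_2)(x,b)=(x,0)+(0,b)=(x,b)$, and (6) from $\pi_1\big(\iota_1(x)+\iota_2(b)\big)=\pi_1(x,b)=x$. For the three partial-associativity identities (7)--(9) I would expand each side coordinatewise: in every case the $B$-coordinate of both sides is the evident sum ($b+b'$ or $b'+b$), so attention is needed only on the $X$-coordinate, and there exactly one of the two action axioms erases the term that would otherwise obstruct associativity --- $0x'=x'$ removes a spurious left action by $0$ in (7) and (8), and $b'0=0$ removes a spurious right summand $b'0$ in (8) and (9). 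Matching the resulting $X$-coordinates completes the check.

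There is no genuine obstacle: the lemma is a bookkeeping exercise. The only point requiring care is tracking which of the two axioms in (1) is invoked at each step --- and resisting the temptation to also "prove" that $\pi_1$ is a homomorphism, since it is not, this being precisely the failure that will force the auxiliary map $\lambda$ and the partial-associativity conditions into the definition of split extension.
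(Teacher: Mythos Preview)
Your proposal is correct. The paper gives no proof of this lemma at all --- it is stated with a terminal \qed\ and left as a routine verification --- so your direct coordinatewise computation from Definition~1.2 and the action axioms~(1) is precisely the intended argument, and your bookkeeping of which axiom is invoked at each step (including the observation that $\pi_1$ is \emph{not} a homomorphism) is accurate.
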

\begin{defi}
	A split extension of magmas is a diagram
	
	\begin{equation}\label{split-extension}
	\xymatrix{X  \ar[r]_-{\kappa}   & A  \ar[r]_-{\alpha} \ar@<-1ex>[l]_-{\lambda} & B \ar@<-1ex>[l]_-{\beta}
	}
	\end{equation}
	
	in which:
	\begin{itemize}
		\item [(a)] $X$, $A$, and $B$ are magmas, $\alpha$, $\beta$, and $\kappa$ are magma homomorphisms, and $\lambda$ preserves zero;
		\item [(b)] the equalities
		\begin{equation}\label{split-identity}
		\lambda\kappa=1,\,\,\alpha\beta=1,
		\end{equation}
		\begin{equation}\label{split-0-map}
		\lambda\beta=0,\,\,\alpha\kappa=0,
		\end{equation}
		\begin{equation}\label{split-biproduct}
		\kappa\lambda+\beta\alpha=1,
		\end{equation}
		\begin{equation}\label{split-non-dis}
		\lambda(\kappa(x)+\beta(b))=x,
		\end{equation}
		\begin{equation}\label{split-left-m}
		\kappa(x)+(\beta(b)+a)=(\kappa(x)+\beta(b))+a,
		\end{equation}
		\begin{equation}\label{split-mid-m}
		\kappa(x)+(a+\beta(b))=(\kappa(x)+a)+\beta(b),
		\end{equation}
		\begin{equation}\label{split-right-m}
		a+(\kappa(x)+\beta(b))=(a+\kappa(x))+\beta(b),
		\end{equation}
		hold for all $x,x'\in X$, $a\in A$ and $b,b'\in B$.
	\end{itemize}
\end{defi}

\begin{defi}
	Given a split extension $\eqref{split-extension}$, the associated action of $B$ on $X$ is defined by $bx=\lambda(\beta(b)+\kappa(x))$.
\end{defi}

Consider the diagram

\begin{equation}\label{canonical-iso}
\xymatrix@=30pt{
	X  \ar@{=} [d] \ar[r]_-{\kappa}   & A  \ar[r]_-{\alpha} \ar@<-1ex>[l]_-{\lambda} \ar@<-0.5ex>[d]_{\varphi}  & B \ar@<-1ex>[l]_-{\beta}  \ar@{=}[d] \\X  \ar[r]_-{\iota_1}   & X \rtimes B \ar[r]_-{\pi_2} \ar@<-1ex>[l]_-{\pi_1} \ar@<-0.5ex>[u]_{\psi}  & B \ar@<-1ex>[l]_-{\iota_2}
}
\end{equation}

in which:
\begin{itemize}
	\item the top row is a split extension of magmas;
	\item the bottom row is a semidirect product diagram in which $B$ acts on $X$ as in Definition 1.5;
	\item $\varphi$ is defined by $\varphi(a)=(\lambda(a),\alpha(a))$;
	\item $\psi$ is defined by $\psi(x,b)=\kappa(x)+\beta(b)$.
\end{itemize}
We are going to prove several lemmas involving this diagram:
\begin{lem}
	The maps $\varphi$ and $\psi$ are bijections, inverse to each other.
\end{lem}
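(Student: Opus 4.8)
The plan is to prove the two maps are mutually inverse by a direct computation of the composites $\psi\varphi$ and $\varphi\psi$, and then note that being mutually inverse set maps immediately yields that each is a bijection. None of the partial-associativity axioms (15)--(17) will be needed here; only the ``biproduct'' and ``non-distributivity'' identities, together with the fact that $\alpha$ is a magma homomorphism, enter the argument.

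First I would compute $\psi\varphi$. For $a\in A$ we have $\psi\varphi(a)=\psi(\lambda(a),\alpha(a))=\kappa\lambda(a)+\beta\alpha(a)=(\kappa\lambda+\beta\alpha)(a)$, which equals $a$ by the ``biproduct'' equality (13). Hence $\psi\varphi=1_A$.

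Next I would compute $\varphi\psi$. For $(x,b)\in X\rtimes B$ we have $\varphi\psi(x,b)=\varphi(\kappa(x)+\beta(b))=\bigl(\lambda(\kappa(x)+\beta(b)),\ \alpha(\kappa(x)+\beta(b))\bigr)$. The first coordinate equals $x$ directly by the ``non-distributivity'' equality (14). For the second coordinate, since $\alpha$ is a magma homomorphism we get $\alpha(\kappa(x)+\beta(b))=\alpha\kappa(x)+\alpha\beta(b)=0+b=b$, using $\alpha\kappa=0$ from (12) and $\alpha\beta=1$ from (11). Thus $\varphi\psi(x,b)=(x,b)$, i.e.\ $\varphi\psi=1_{X\rtimes B}$.

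From $\psi\varphi=1_A$ and $\varphi\psi=1_{X\rtimes B}$ it follows that $\varphi$ and $\psi$ are bijections, inverse to each other. There is no real obstacle: the only point requiring a little care is to resist trying to ``split'' $\lambda(\kappa(x)+\beta(b))$ through $\lambda$ (which is not assumed additive) and instead to invoke (14) as a single identity; the corresponding second-coordinate computation, by contrast, \emph{does} legitimately use additivity, but of $\alpha$, which is a homomorphism.
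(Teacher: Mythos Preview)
Your proof is correct and follows essentially the same approach as the paper's own proof: a direct computation of both composites using (13) for $\psi\varphi=1_A$ and (14), (11), (12) together with the additivity of $\alpha$ for $\varphi\psi=1_{X\rtimes B}$. The only cosmetic difference is that the paper checks $\varphi\psi$ first and $\psi\varphi$ second.
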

\begin{proof}
	For $x\in X$ and $b\in B$, we have:
	\begin{equation*}
	\varphi\psi(x,b)=\varphi(\kappa(x)+\beta(b))=(\lambda(\kappa(x)+\beta(b)),\alpha(\kappa(x)+\beta(b)))=(x,b),
	\end{equation*}
	where $\lambda(\kappa(x)+\beta(b))=x$ by \eqref{split-non-dis}, while $\alpha(\kappa(x)+\beta(b))=b$ follows from \eqref{split-0-map}, \eqref{split-identity},
	and the fact $\alpha$ is a magma homomorphism.
	
	For $a\in A$, we have:
	\begin{equation*}
	\psi\varphi(a)=\psi(\lambda(a),\alpha(a))=\kappa\lambda(a)+\beta\alpha(a)=a,
	\end{equation*}
	where the last equality holds by \eqref{split-biproduct}.
\end{proof}
\begin{lem}
	For $x\in X$ and $b\in B$, we have: $\beta(b)+\kappa(x)=\kappa(bx)+\beta(b)$.
\end{lem}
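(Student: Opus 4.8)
The plan is to show that the element $a:=\beta(b)+\kappa(x)$ of $A$ already has $\kappa(bx)+\beta(b)$ as its canonical decomposition coming from \eqref{split-biproduct}. Indeed, applying \eqref{split-biproduct} to $a$ gives $a=\kappa\lambda(a)+\beta\alpha(a)$, so the whole statement reduces to identifying the two summands on the right as $\kappa(bx)$ and $\beta(b)$ respectively.

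First I would compute $\alpha(a)$. Since $\alpha$ is a magma homomorphism, $\alpha(a)=\alpha(\beta(b))+\alpha(\kappa(x))$, and by $\alpha\beta=1$ from \eqref{split-identity} together with $\alpha\kappa=0$ from \eqref{split-0-map} this equals $b+0=b$; hence $\beta\alpha(a)=\beta(b)$. Next, $\lambda(a)=\lambda(\beta(b)+\kappa(x))$ is, by definition of the associated action in Definition 1.5, exactly $bx$, so $\kappa\lambda(a)=\kappa(bx)$. Substituting both identifications into $a=\kappa\lambda(a)+\beta\alpha(a)$ yields $\beta(b)+\kappa(x)=\kappa(bx)+\beta(b)$.

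There is no genuine obstacle here, and in fact the only point worth flagging is a negative one: the partial-associativity axioms \eqref{split-left-m}--\eqref{split-right-m} play no role in this particular lemma, which rests solely on the ``biproduct'' identity \eqref{split-biproduct}, the fact that $\alpha$ preserves addition, and \eqref{split-identity}--\eqref{split-0-map}. One should just be careful to read off $\lambda(a)$ in the order in which the action was defined (with $\beta(b)$ written on the left of $\kappa(x)$), so that $\lambda(a)=bx$ rather than some other expression.
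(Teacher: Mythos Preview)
Your proof is correct and is essentially the paper's own argument: the paper writes $\beta(b)+\kappa(x)=\psi\varphi(\beta(b)+\kappa(x))$ via Lemma~1.6, but since $\psi\varphi=\kappa\lambda+\beta\alpha$ this is exactly your use of \eqref{split-biproduct}, followed by the same identifications $\alpha(a)=b$ and $\lambda(a)=bx$. Your remark that \eqref{split-left-m}--\eqref{split-right-m} are not needed here is also accurate.
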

\begin{proof}
	Using Lemma 1.6, we calculate:
	\begin{equation*}
	\beta(b)+\kappa(x)=\psi\varphi(\beta(b)+\kappa(x))=\psi(\lambda(\beta(b)+\kappa(x)),\alpha(\beta(b)+\kappa(x)))=\psi(bx,b)
	\end{equation*}
	\begin{equation*}
	=\kappa(bx)+\beta(b),
	\end{equation*}
	where $\lambda(\beta(b)+\kappa(x))=bx$ by Definition 1.5, while $\alpha(\beta(b)+\kappa(x))=b$ follows from \eqref{split-identity}, \eqref{split-0-map}, and the fact $\alpha$ is a magma homomorphism.
\end{proof}
\begin{lem}
	The map $\psi$ is a magma homomorphism.
\end{lem}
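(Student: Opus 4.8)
The plan is to verify directly that $\psi$ respects addition, i.e.\ that
\[
\psi\bigl((x,b)+(x',b')\bigr)=\psi(x,b)+\psi(x',b')
\]
for all $x,x'\in X$ and $b,b'\in B$. Unwinding the left-hand side using the definition of the semidirect product addition in Definition 1.2(a), this amounts to proving
\[
\kappa(x+bx')+\beta(b+b')=\bigl(\kappa(x)+\beta(b)\bigr)+\bigl(\kappa(x')+\beta(b')\bigr).
\]
Since $\kappa$ and $\beta$ are magma homomorphisms, the left-hand side equals $\kappa(x)+\kappa(bx')+\beta(b)+\beta(b')$, so the identity to be shown reduces to
\[
\kappa(x)+\bigl(\kappa(bx')+\beta(b)\bigr)+\beta(b')=\kappa(x)+\Bigl(\beta(b)+\bigl(\kappa(x')+\beta(b')\bigr)\Bigr),
\]
and by Lemma~1.7 the bracketed term $\kappa(bx')+\beta(b)$ on the left is exactly $\beta(b)+\kappa(x')$. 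So the whole computation comes down to correctly inserting parentheses and then applying Lemma~1.7 once.

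The key steps, in order, are: first expand $\psi$ of the sum via Definitions 1.2(a) and 1.3; second, use that $\kappa$ and $\beta$ are homomorphisms to split $\kappa(x+bx')$ and $\beta(b+b')$; third, reassociate the four-term sum $\kappa(x)+\beta(b)+\kappa(x')+\beta(b')$ appropriately using the partial associativity identities \eqref{split-left-m}, \eqref{split-mid-m}, \eqref{split-right-m} so as to isolate a subterm of the form $\beta(b)+\kappa(x')$; fourth, apply Lemma~1.7 to rewrite $\beta(b)+\kappa(x')$ as $\kappa(bx')+\beta(b)$ (or conversely, depending on which direction one assembles the two sides); and finally reassociate once more to match $\psi(x,b)+\psi(x',b')=(\kappa(x)+\beta(b))+(\kappa(x')+\beta(b'))$.

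The main obstacle is purely bookkeeping: the partial associativity axioms \eqref{split-left-m}--\eqref{split-right-m} only permit reassociation of triples in which one of the outer entries lies in the image of $\kappa$ and the adjacent one in the image of $\beta$, so one must be careful that every reassociation step actually has this form. Concretely, to move parentheses in $\bigl(\kappa(x)+\beta(b)\bigr)+\bigl(\kappa(x')+\beta(b')\bigr)$ one first uses \eqref{split-right-m} (with $a=\kappa(x)$) to get $\kappa(x)+\bigl(\beta(b)+\kappa(x')\bigr)+\beta(b')$ — legitimate since $\beta(b)+\kappa(x')$ has a $\kappa$-term adjacent to a $\beta$-term — then applies Lemma~1.7, and then uses \eqref{split-left-m} (now with $a=\beta(b')$) to collect the $\beta$-terms. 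I expect the argument to be short once the order of these three invocations is pinned down; no genuinely hard point arises, since Lemma~1.7 has already done the conceptual work of relating $\beta(b)+\kappa(x)$ to the action.
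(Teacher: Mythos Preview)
Your plan is exactly the paper's: expand $\psi$ of the sum, use that $\kappa$ and $\beta$ are homomorphisms, reassociate via the partial-associativity axioms \eqref{split-left-m}--\eqref{split-right-m}, apply Lemma~1.7 once, and reassociate back to $(\kappa(x)+\beta(b))+(\kappa(x')+\beta(b'))$. One caution on the very bookkeeping you flag: in a non-associative magma four-term sums must always carry parentheses (so ``$\kappa(x)+\kappa(bx')+\beta(b)+\beta(b')$'' is not yet a well-formed expression), and your claimed single use of \eqref{split-right-m} ``with $a=\kappa(x)$'' to pass from $(\kappa(x)+\beta(b))+(\kappa(x')+\beta(b'))$ to $\kappa(x)+(\beta(b)+\kappa(x'))+\beta(b')$ is actually two steps --- first \eqref{split-right-m} with $a=\kappa(x)+\beta(b)$ to peel off $\beta(b')$, then \eqref{split-left-m} (read right to left) to move the inner bracket.
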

\begin{proof}
	$\psi$ obviously preserves zero, and, to prove that it preserves addition, we calculate:
	\begin{equation*}
	\psi((x,b)+(x',b'))=\psi(x+bx',b+b')
	\end{equation*}
	(by 1.2(a))
	\begin{equation*}
	=\kappa(x+bx')+\beta(b+b')
	\end{equation*}
	(by definition of $\psi$)
	\begin{equation*}
	=\kappa(x+bx')+(\beta(b)+\beta(b'))
	\end{equation*}
	(since $\beta$ is a magma homomorphism)
	\begin{equation*}
	=(\kappa(x+bx')+\beta(b))+\beta(b')
	\end{equation*}
	(by (15), or by (16))
	\begin{equation*}
	=((\kappa(x)+\kappa(bx'))+\beta(b))+\beta(b')
	\end{equation*}
	(since $\kappa$ is a magma homomorphism)
	\begin{equation*}
	=(\kappa(x)+(\kappa(bx')+\beta(b)))+\beta(b')
	\end{equation*}
	(by (16), or by (17))
	\begin{equation*}
	=(\kappa(x)+(\beta(b)+\kappa(x')))+\beta(b')
	\end{equation*}
	(by Lemma 1.7)
	\begin{equation*}
	=((\kappa(x)+\beta(b))+\kappa(x'))+\beta(b')
	\end{equation*}
	(by (15))
	\begin{equation*}
	=(\kappa(x)+\beta(b))+(\kappa(x')+\beta(b'))
	\end{equation*}
	(by (17))
	\begin{equation*}
	=\psi(x,b)+\psi(x',b')
	\end{equation*}
	(by definition of $\psi$).
\end{proof}

\begin{remark}
	Lemmas 1.6 and 1.7 do not use the magma structure of $X \rtimes B$, and so one might prefer not to introduce Definitions 1.2 and 1.5 before Lemma 1.6, but to begin with Lemma 1.6 as the motivation of introducing the semidirect products as cartesian products with modified structure.
\end{remark}
\begin{lem} The diagram $(18)$ reasonably commutes, in the sense that
	\begin{equation}
	\varphi\kappa=\iota_1,\,\,\varphi\beta=\iota_2,\,\,\pi_1\varphi=\lambda,\,\,\pi_2\varphi=\alpha,
	\end{equation}
	\begin{equation}
	\psi\iota_1=\kappa,\,\,\psi\iota_2=\beta,\,\,\lambda\psi=\pi_1,\,\,\alpha\psi=\pi_2.
	\end{equation}
\end{lem}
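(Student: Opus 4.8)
The plan is to check each of the eight equalities in (19) and (20) pointwise, reading off everything from the defining formulas $\varphi(a)=(\lambda(a),\alpha(a))$ and $\psi(x,b)=\kappa(x)+\beta(b)$, the fact that $\kappa$, $\beta$, $\alpha$ are magma homomorphisms (hence preserve $0$), and axioms \eqref{split-identity}, \eqref{split-0-map}, \eqref{split-non-dis}. Neither the partial-associativity conditions \eqref{split-left-m}--\eqref{split-right-m} nor Lemmas 1.8 and 1.9 will be used, and, importantly, $\lambda$ will never have to be distributed over a sum.

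For (19): applying $\varphi$ to $\kappa(x)$ gives $(\lambda\kappa(x),\alpha\kappa(x))$, which is $(x,0)=\iota_1(x)$ by the first equation of \eqref{split-identity} and the second of \eqref{split-0-map}; applying $\varphi$ to $\beta(b)$ gives $(\lambda\beta(b),\alpha\beta(b))=(0,b)=\iota_2(b)$ by the first equation of \eqref{split-0-map} and the second of \eqref{split-identity}; and $\pi_1\varphi=\lambda$, $\pi_2\varphi=\alpha$ are immediate from the definitions of the projections $\pi_1$ and $\pi_2$. Thus $\varphi\kappa=\iota_1$, $\varphi\beta=\iota_2$, $\pi_1\varphi=\lambda$, $\pi_2\varphi=\alpha$.

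For (20) the quickest route is to deduce it from (19) together with Lemma 1.6, which says $\varphi$ and $\psi$ are mutually inverse: then $\psi\iota_1=\psi\varphi\kappa=\kappa$, $\psi\iota_2=\psi\varphi\beta=\beta$, $\lambda\psi=\pi_1\varphi\psi=\pi_1$, and $\alpha\psi=\pi_2\varphi\psi=\pi_2$. Alternatively one can check (20) directly in the same style as (19): $\psi\iota_1(x)=\psi(x,0)=\kappa(x)+\beta(0)=\kappa(x)$ since $\beta$ preserves $0$; $\psi\iota_2(b)=\psi(0,b)=\kappa(0)+\beta(b)=\beta(b)$ since $\kappa$ preserves $0$; $\lambda\psi(x,b)=\lambda(\kappa(x)+\beta(b))=x=\pi_1(x,b)$ by \eqref{split-non-dis}; and $\alpha\psi(x,b)=\alpha(\kappa(x)+\beta(b))=\alpha\kappa(x)+\alpha\beta(b)=0+b=b=\pi_2(x,b)$, using that $\alpha$ is a magma homomorphism with the second equation of \eqref{split-0-map} and the first of \eqref{split-identity}.

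There is no genuine obstacle here; the statement is a bookkeeping exercise. The one subtlety worth flagging is that $\lambda$ is not assumed additive, so it must never be pushed through a sum — and in each of the computations above it is either applied to an element whose value it is pinned to by an axiom ($\lambda\kappa=1$, $\lambda\beta=0$) or its value on a sum of the special form $\kappa(x)+\beta(b)$ is handed to us verbatim by \eqref{split-non-dis}. So no illegal manipulation of $\lambda$ occurs.
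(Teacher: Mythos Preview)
Your proof is correct. The paper actually omits the proof of this lemma entirely (there is no \texttt{proof} environment after the statement; Remark 1.11 follows immediately), treating it as a routine verification --- which it is, and your write-up is exactly the kind of direct check the authors are implicitly leaving to the reader. The approach you take, reading the equalities in (19) off the definition of $\varphi$ together with \eqref{split-identity} and \eqref{split-0-map}, and then obtaining (20) either from Lemma 1.6 or by the analogous direct computation using \eqref{split-non-dis}, is the natural one.

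One tiny slip in your bookkeeping: in the last line of the direct verification of $\alpha\psi=\pi_2$, the identity $\alpha\beta(b)=b$ uses the \emph{second} equation of \eqref{split-identity}, not the first. This is purely a labeling error and does not affect the argument.
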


\begin{remark}
	As Lemmas 1.6, 1.8, and 1.10 show, a split extension of magmas is nothing but a semidirect product diagram up to an isomorphism (cf. Remark 1.9). Using this, note:
	\begin{itemize}
		\item [(a)] The formulas $(11)$-$(17)$ are straightforward translations of the formulas $(3)$-$(9)$. However, the formulas $(11)$-$(17)$ are not logically independent from each other. In particular, $(14)$ immediately implies the first equalities of $(11)$ and $(12)$. Note also, that $(16)$ can be deduced from $(13)$, $(15)$, and $(17)$.
		\item [(b)] The counterpart of the addition formula from Definition 1.2(a) for the addition in $A$ is
		\begin{equation}
		(\kappa(x)+\beta(b))+(\kappa(x')+\beta(b'))=\kappa(x+bx')+\beta(b+b'),
		\end{equation}
		which just repeats Lemma 1.8. The reason why it works as such counterpart is that every element of $A$ can be uniquely presented as a sum of an element from $\kappa(X)$ and an element from $\beta(B)$; explicitly,  $a=(\kappa\lambda+\beta\alpha)(a)=\kappa\lambda(a)+\beta\alpha(a)$, for each $a\in A$.
	\end{itemize}
\end{remark}

Our last lemma of this section collects purely categorical properties of a split extension (10).
\begin{lem}
	\begin{itemize}
		\item [(a)] $\kappa$ and $\beta$ are jointly strongly epimorphic in the category of magmas;
		\item [(b)] $\lambda$ and $\alpha$ form a product diagram in the category of sets;
		\item [(c)] $\kappa$ is a kernel of $\alpha$ in the category of magmas;
		\item [(d)] $\alpha$ is a cokernel of $\kappa$ in the category of magmas.
	\end{itemize}
\end{lem}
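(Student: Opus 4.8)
The common engine behind all four items is the biproduct identity $(13)$, $\kappa\lambda+\beta\alpha=1$: it says that every $a\in A$ equals the single sum $\kappa\lambda(a)+\beta\alpha(a)$ of an element of $\kappa(X)$ with one of $\beta(B)$. I will use it together with the elementary facts that in the category of unitary magmas the one-element magma is a zero object, monomorphisms are exactly the injective homomorphisms, bijective homomorphisms are isomorphisms, and split monomorphisms and split epimorphisms are, respectively, monomorphisms and epimorphisms. Part $(b)$ then requires no new computation: Lemma $1.6$ asserts that $\varphi=(\lambda,\alpha)\colon A\to X\times B$ is a bijection, and $\pi_1\varphi=\lambda$, $\pi_2\varphi=\alpha$ (Lemma $1.10$, or direct inspection), so $(A,\lambda,\alpha)$ is a product of $X$ and $B$ in the category of sets.

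For $(a)$ I would first check that $\kappa$ and $\beta$ are jointly epimorphic: if $u,v\colon A\to D$ are homomorphisms with $u\kappa=v\kappa$ and $u\beta=v\beta$, then applying $u$ and $v$ to the decomposition $a=\kappa\lambda(a)+\beta\alpha(a)$ gives $u(a)=u\kappa\lambda(a)+u\beta\alpha(a)=v\kappa\lambda(a)+v\beta\alpha(a)=v(a)$. For the ``strong'' part I use the standard characterization (available since magmas form a complete category): a jointly epimorphic pair is jointly strongly epimorphic precisely when every monomorphism through which both members factor is an isomorphism. So suppose $m\colon S\to A$ is a monomorphism with $\kappa=m\kappa'$ and $\beta=m\beta'$. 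Then $a=\kappa\lambda(a)+\beta\alpha(a)=m\bigl(\kappa'\lambda(a)+\beta'\alpha(a)\bigr)$, so $m$ is surjective; being injective as well, it is an isomorphism.

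Items $(c)$ and $(d)$ express the universal properties of kernel and cokernel with respect to the zero object, and in both $(12)$ supplies $\alpha\kappa=0$. For $(c)$, let $f\colon Y\to A$ be a homomorphism with $\alpha f=0$; precomposing $(13)$ with $f$ and using $\beta\alpha f=0$ (as $\beta$ preserves zero) yields $f=\kappa\lambda f$. Hence $g:=\lambda f\colon Y\to X$ satisfies $\kappa g=f$, and it is automatically a magma homomorphism since it is a set-theoretic lift of the homomorphism $f$ through the injective homomorphism $\kappa$; it is the unique such lift because $\kappa$ is a monomorphism. Thus $\kappa$ is a kernel of $\alpha$. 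For $(d)$, let $f\colon A\to C$ be a homomorphism with $f\kappa=0$; applying $f$ to $a=\kappa\lambda(a)+\beta\alpha(a)$ and using $f\kappa\lambda=0$ gives $f=f\beta\alpha$, so $g:=f\beta\colon B\to C$ is a homomorphism with $g\alpha=f$, and it is unique because $\alpha$ is a (split) epimorphism. Thus $\alpha$ is a cokernel of $\kappa$.

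I do not expect a genuine obstacle here; the only subtlety is that $\lambda$ is merely a zero-preserving map, so in $(c)$ the candidate $g=\lambda f$ is not a priori a morphism of magmas, which is exactly why the monomorphy of $\kappa$ must be invoked at that point. As an alternative for $(c)$ and $(d)$ one may instead verify directly, from the formula $(x,b)+(x',b')=(x+bx',b+b')$ and the action axioms $(1)$, that in the semidirect product diagram $(2)$ the map $\iota_1$ is a kernel of $\pi_2$ and $\pi_2$ a cokernel of $\iota_1$, and then transport these facts along the isomorphism of split extensions furnished by $\varphi$ and $\psi$ (Lemmas $1.6$, $1.8$, $1.10$), in the spirit of Remark $1.11$.
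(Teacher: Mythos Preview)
Your proof is correct and uses the same key ingredient as the paper, namely the decomposition $a=\kappa\lambda(a)+\beta\alpha(a)$ from $(13)$. The paper is terser: it simply says that all four items follow from Lemmas~1.6 and~1.8 (the isomorphism with the semidirect product diagram, which you mention as your alternative), and then writes out only (d) explicitly, via exactly your $t=s\beta$ argument; your direct treatments of (a) and (c) are the natural elementwise unpackings of that same transport, so there is no genuine divergence in method.
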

\begin{proof} All these assertions easily follow from Lemmas 1.6 and 1.8. Nevertheless let us give a direct rather categorical proof of (d), which we will also need later. We have to show that, for every magma $C$ and magma homomorphism $s:A\to C$ with $s\kappa=0$, there exists a magma homomorphism $t:B\to C$ with $t\alpha=s$. For that, we take $t=s\beta$ and calculate:
	\begin{equation*}
	t\alpha=s\beta\alpha=s\beta\alpha(\kappa\lambda+\beta\alpha)=s\beta\alpha\kappa\lambda+s\beta\alpha\beta\alpha=0+s\beta\alpha=s\kappa\lambda+s\beta\alpha=s(\kappa\lambda+\beta\alpha)
	\end{equation*}
	$=s$. Equivalently, the same could be deduced from (a) without using $\lambda$, making the argument even `more categorical'.
\end{proof}
\begin{remark}
In \cite{[MFM]} N. Martins-Ferreira and A. Montoli  considered the class $S$ of Schreier split epimorphisms of unitary magmas, whose category can be identified (up to a canonical equivalence) with the category of diagrams $(10)$ satisfying conditions $(11)$ to $(14)$ in this paper. As observed in \cite{[MFM]} (Proposition 2.5) any J\'onsson-Tarski variety is $S$-protomodular in the sense of \cite{[BMMS]}, for $S$ the class of Schreier split epimorphisms. In particular, this fact implies that any J\'onsson-Tarski variety satisfies the $S$-relative version of the Split Short Five Lemma. Adding conditions $(15)$ to $(17)$ does not change that of course. However, in Section 4.4 below, a different approach will be used and some further observations on the validity of the Split Short Five Lemma in our context will be made.
\end{remark}

\section{The equivalence}

Let us present a split extension $(10)$ as the seven-tuple $E=(B,X,A,\alpha,\beta,\kappa,\lambda)$. This order of letters has a good reason:
\begin{itemize}
	\item at the level of objects, we think of $E$ as a split extension of $B$ with kernel $X$, and of $A$ as the object part of the extension;
	\item the pair $(\alpha,\beta)$ is what makes $A$ a split extension of $B$;
	\item the pair $(\kappa,\lambda)$ completes the structure by making $X$ the kernel of the extension.
\end{itemize}

\begin{defi}
	Let $E=(B,X,A,\alpha,\beta,\kappa,\lambda)$ and $E'=(B',X',A',\alpha',\beta',\kappa',\lambda')$ be split extensions of magmas. A morphism $E\to E'$ is a diagram morphism formed by magma homomorphisms, that is, it is a triple $(f,u,p)$ such that:
	\begin{itemize}
		\item [(a)] $f:B\to B'$, $u:X\to X'$, and $p:A\to A'$ are magma homomorphisms;
		\item [(b)] the diagram
		\begin{equation}\label{split-morphism}
		\xymatrix@=30pt{
			X  \ar[d]_u \ar[r]_-{\kappa}   & A  \ar[r]_-{\alpha} \ar@<-1ex>[l]_-{\lambda} \ar[d]_{p}  & B \ar@<-1ex>[l]_-{\beta}  \ar[d]^f \\X'  \ar[r]_-{\kappa'}   & A' \ar[r]_-{\alpha'} \ar@<-1ex>[l]_-{\lambda'}   & B' \ar@<-1ex>[l]_-{\beta'}
		}
		\end{equation}
		reasonably commutes, in the sense that
		\begin{equation}
		p\kappa=\kappa'u,\,\,p\beta=\beta'f,\,\,\lambda'p=u\lambda,\,\,\alpha'p=f\alpha.
		\end{equation}
	\end{itemize}
	The category whose objects are all split extensions of magmas and morphisms are defined above and composed componentvise will be called the category of split extensions of magmas and denoted by $\mathsf{SplExt}$.
\end{defi}

\begin{remark} Again (cf. Remark 1.11(a)), while requiring the four equalities (23) is natural of course, these equalities are not logically independent. In particular, the first two of them are equivalent to the last two. Indeed:
	\begin{itemize}
		\item Assuming $p\kappa=\kappa'u$ and $p\beta=\beta'f$, we have:
		\begin{equation*}
		\lambda'p=\lambda'p(\kappa\lambda+\beta\alpha)=\lambda'(p\kappa\lambda+p\beta\alpha)=\lambda'(\kappa'u\lambda+\beta'f\alpha)=u\lambda,
		\end{equation*}
		where the last equality follows from $(14)$ applied to $E'$;
		\begin{equation*}
		\alpha'p=\alpha'p(\kappa\lambda+\beta\alpha)=\alpha'p\kappa\lambda+\alpha'p\beta\alpha=\alpha'\kappa'u\lambda+\alpha'\beta'f\alpha=0+f\alpha=f\alpha.
		\end{equation*}
		\item On the other hand, assuming $\lambda'p=u\lambda$ and $\alpha'p=f\alpha$, we have:
		\begin{equation*}
		p\kappa=(\kappa'\lambda'+\beta'\alpha')p\kappa=\kappa'\lambda'p\kappa+\beta'\alpha'p\kappa=\kappa'u\lambda\kappa+\beta'f\alpha\kappa=\kappa'u+0=\kappa'u;
		\end{equation*}
		\begin{equation*}
		p\beta=(\kappa'\lambda'+\beta'\alpha')p\beta=\kappa'\lambda'p\beta+\beta'\alpha'p\beta=\kappa'u\lambda\beta+\beta'f\alpha\beta=0+\beta'f=\beta'f.
		\end{equation*}
	\end{itemize}
\end{remark}

\begin{lem}
	Let $E=(B,X,A,\alpha,\beta,\kappa,\lambda)$ and $E'=(B',X',A',\alpha',\beta',\kappa',\lambda')$ be split extensions of magmas with $X$ and $X'$ being equipped with the associated actions of $B$ and $B'$, respectively. If $(f,u,p):E\to E'$ is a morphism of split extensions, then for all $b\in B$ and $x\in X$, we have
	\begin{equation}
	u(bx)=f(b)u(x).
	\end{equation}
\end{lem}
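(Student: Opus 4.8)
The plan is to compute $u(bx)$ directly from the definition of the associated actions and the commutativity relations (23), reducing everything to $f(b)u(x)$ in a short chain of equalities. By Definition 1.5 applied to $E$, we have $bx=\lambda(\beta(b)+\kappa(x))$, so the left-hand side is $u(bx)=u\lambda(\beta(b)+\kappa(x))$.

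First I would rewrite $u\lambda$ using the third equality of (23), namely $u\lambda=\lambda'p$, obtaining $u(bx)=\lambda'p(\beta(b)+\kappa(x))$. Next, since $p$ is a magma homomorphism (Definition 2.1(a)), it preserves addition, so $p(\beta(b)+\kappa(x))=p\beta(b)+p\kappa(x)$. Then I would apply the first two equalities of (23), $p\kappa=\kappa'u$ and $p\beta=\beta'f$, to get $p\beta(b)+p\kappa(x)=\beta'(f(b))+\kappa'(u(x))$. Putting this together gives $u(bx)=\lambda'\bigl(\beta'(f(b))+\kappa'(u(x))\bigr)$, which is exactly $f(b)u(x)$ by Definition 1.5 applied to $E'$. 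This proves (24).

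I do not expect a genuine obstacle here: the only point requiring a little care is that $\lambda$ (and $\lambda'$) are merely zero-preserving maps, not magma homomorphisms, so one may not distribute $\lambda'$ over the sum; but the argument never needs to, since the only homomorphism property used is that of $p$, together with the purely diagrammatic identities (23). In fact one could equally well argue from the $\psi$-side, using Lemmas 1.6 and 1.8 to identify $E$ and $E'$ with semidirect product diagrams and then reading off (24) from the fact that $(f,u,p)$ becomes a morphism of semidirect product diagrams; but the direct computation above is the most economical.
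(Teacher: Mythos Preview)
Your proof is correct and essentially identical to the paper's own argument: both start from Definition~1.5, use the third equality of (23) to replace $u\lambda$ by $\lambda'p$, distribute $p$ over the sum, apply the first two equalities of (23), and finish with Definition~1.5 for $E'$. Your closing remarks about the role of $\lambda'$ not being a homomorphism and the alternative semidirect-product route are accurate but not needed for the proof itself.
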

\begin{proof}
	We have:
	\begin{equation*}
	u(bx)=u\lambda(\beta(b)+\kappa(x))
	\end{equation*}
	(by Definition 1.5)
	\begin{equation*}
	=\lambda'p(\beta(b)+\kappa(x))
	\end{equation*}
	(by the third equality in (23))
	\begin{equation*}
	=\lambda'(p\beta(b)+p\kappa(x))
	\end{equation*}
	(since $p$ is a magma homomorphism)
	\begin{equation*}
	=\lambda'(\beta'f(b)+\kappa'u(x))
	\end{equation*}
	(by the first two equalities of (23))
	\begin{equation*}
	=f(b)u(x)
	\end{equation*}
	(by Definition 1.5 applied to $E'$).
\end{proof}

\begin{remark} Rephrasing the first sentence of Remark 1.11, we can say now:
	
	As follows from Lemmas 1.6 and 1.8, the maps $\varphi$ and $\psi$ involved in diagram (18) determine inverse to each other isomorphisms $(1_B,1_X,\varphi)$ and $(1_B,1_X,\psi)$ between $E=(B,X,A,\alpha,\beta,\kappa,\lambda)$ and the bottom row of (18) seen as another split extension.
	
	Furthermore, now we can add:
	
	Applying Lemma 2.3 to these isomorphisms, we see that the action of $B$ on $X$ associated to $E$ is the same as the action $h$ associated to that bottom row. This, however, can be deduced also from Definition 1.5 directly. Indeed, using Definition 1.5, we obtain $h(b,x)=\pi_1(\iota_2(b)+\iota_1(x))=\pi_1((0,b)+(x,0))=\pi_1(bx,b)=bx$.
\end{remark}

\begin{lem}
	Let $E=(B,X,A,\alpha,\beta,\kappa,\lambda)$ and $E'=(B',X',A',\alpha',\beta',\kappa',\lambda')$ be split extensions of magmas, with $X$ and $X'$ equipped with the associated actions of $B$ and $B'$ respectively. For magma homomorphisms $f:B\to B'$, $u:X\to X'$, the following conditions are equivalent:
	\begin{itemize}
		\item [(a)] there exists a magma homomorphism $p:A\to A'$ such that $(f,u,p):E\to E'$ is a morphism of split extensions;
		\item [(b)] there exists a unique magma homomorphism $p:A\to A'$ such that $(f,u,p):E\to E'$ is a morphism of split extensions;
		\item [(c)] for all $b\in B$ and $x\in X$, we have
		\begin{equation}
		u(bx)=f(b)u(x).
		\end{equation}
	\end{itemize}
\end{lem}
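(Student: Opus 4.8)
The plan is to prove the cycle of implications (b)$\Rightarrow$(a)$\Rightarrow$(c)$\Rightarrow$(b). The implication (b)$\Rightarrow$(a) is trivial. The implication (a)$\Rightarrow$(c) is exactly the content of Lemma 2.3, so nothing new needs to be done there. Hence the real work is in (c)$\Rightarrow$(b): assuming the compatibility condition $u(bx)=f(b)u(x)$, I must construct a magma homomorphism $p\colon A\to A'$ making $(f,u,p)$ a morphism, and show it is unique.

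For existence, the natural move is to exploit Remark 1.11(b) (equivalently Lemmas 1.6 and 1.8): every $a\in A$ can be written uniquely as $a=\kappa(x)+\beta(b)$ with $x=\lambda(a)$ and $b=\alpha(a)$. So I would \emph{define} $p(a)=\kappa'(\lambda(a))+\beta'(\alpha(a))$, i.e. $p=\kappa'u\lambda+\beta'f\alpha$ in terms of composites; equivalently $p$ is the composite $\psi'\circ(u\times f)\circ\varphi$ of the canonical bijections of diagram (18) for $E$ and $E'$ with $u\times f$. Then $p$ visibly preserves zero, and the equalities $\lambda'p=u\lambda$ and $\alpha'p=f\alpha$ follow immediately from (11) and (12) applied to $E'$ (as in Remark 2.2); by Remark 2.2 these already give $p\kappa=\kappa'u$ and $p\beta=\beta'f$. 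The only nontrivial point is that $p$ preserves addition, and this is where the compatibility condition (c) enters, so I expect that to be the main obstacle.

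To check additivity of $p$, I would take $a=\kappa(x)+\beta(b)$ and $a'=\kappa(x')+\beta(b')$ and compute $a+a'$. Using the partial associativity laws (15)--(17) exactly as in the proof of Lemma 1.8, together with Lemma 1.7 (which rewrites $\beta(b)+\kappa(x')$ as $\kappa(bx')+\beta(b')$... more precisely $\beta(b)+\kappa(x')=\kappa(bx')+\beta(b)$), one gets $a+a'=\kappa(x+bx')+\beta(b+b')$; that is, the addition in $A$ is governed by the formula (21). Then $p(a+a')=\kappa'u(x+bx')+\beta'f(b+b')=\kappa'(u(x)+u(bx'))+\beta'(f(b)+f(b'))$, since $u$ and $f$ are magma homomorphisms. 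Now apply condition (c): $u(bx')=f(b)u(x')$, so the right-hand side becomes $\kappa'(u(x)+f(b)u(x'))+\beta'(f(b)+f(b'))=\kappa'u(x)+\beta'f(b)+\kappa'u(x')+\beta'f(b')$, using the formula (21) for $A'$ read backwards. This last expression is exactly $p(a)+p(a')$. Hence $p$ is a magma homomorphism and $(f,u,p)$ is a morphism of split extensions.

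Finally, for uniqueness: if $p'$ is any magma homomorphism with $(f,u,p')\colon E\to E'$ a morphism, then $\lambda'p'=u\lambda$ and $\alpha'p'=f\alpha$, so for each $a\in A$, using (13) for $A'$, $p'(a)=(\kappa'\lambda'+\beta'\alpha')p'(a)=\kappa'\lambda'p'(a)+\beta'\alpha'p'(a)=\kappa'u\lambda(a)+\beta'f\alpha(a)=p(a)$; thus $p'=p$. This closes the cycle and proves all three conditions equivalent.
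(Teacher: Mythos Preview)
Your proof is correct and follows essentially the same route as the paper: define $p=\kappa'u\lambda+\beta'f\alpha$, verify the commutativity conditions of (23) (the paper checks $p\kappa$ and $p\beta$ first while you check $\lambda'p$ and $\alpha'p$, but Remark~2.2 makes these interchangeable), and then establish additivity of $p$ via formula~(21) together with condition~(c). One small slip: the equality $\lambda'p=u\lambda$ needs (14) rather than just (11)--(12), since $\lambda'$ is not a homomorphism---but your parenthetical ``as in Remark~2.2'' already points to the right ingredient.
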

\begin{proof}
	(a)$\Leftrightarrow$(b) immediately follows from any of the two first assertions of Lemma 1.12.
	
	(a)$\Rightarrow$(c) follows from Lemma 2.3.
	
	(c)$\Rightarrow$(a): Putting $p=\kappa'u\lambda+\beta'f\alpha$, we calculate:
	\begin{equation*}
	p\kappa=(\kappa'u\lambda+\beta'f\alpha)\kappa=\kappa'u\lambda\kappa+\beta'f\alpha\kappa=\kappa'u+0=\kappa'u,
	\end{equation*}
	\begin{equation*}
	p\beta=(\kappa'u\lambda+\beta'f\alpha)\beta=\kappa'u\lambda\beta+\beta'f\alpha\beta=0+\beta'f=\beta'f.
	\end{equation*}
	Having in mind Remark 2.2, this would complete the proof if we knew that $p$ is a magma homomorphism. However, we have
	\begin{equation*}
	p(a+a')=p((\kappa\lambda(a)+\beta\alpha(a))+(\kappa\lambda(a')+\beta\alpha(a')))
	\end{equation*}
	(see Remark 1.11(b))
	\begin{equation*}
	=p(\kappa(\lambda(a)+\alpha(a)\lambda(a'))+\beta(\alpha(a)+\alpha(a')))
	\end{equation*}
	(by (21))
	\begin{equation*}
	=\kappa'u\lambda(\kappa(\lambda(a)+\alpha(a)\lambda(a'))+\beta(\alpha(a)+\alpha(a')))+\beta'f\alpha(\kappa(\lambda(a)+\alpha(a)\lambda(a'))+\beta(\alpha(a)
	\end{equation*}
	\begin{equation*}
	+\alpha(a')))
	\end{equation*}
	(by definition of $p$)
	\begin{equation*}
	=\kappa'u(\lambda(a)+\alpha(a)\lambda(a'))+\beta'f(\alpha(a)+\alpha(a'))
	\end{equation*}
	(by (14), the fact that $\alpha$ is a magma homomorphism, and the second equalities of (11) and (12))
	\begin{equation*}
	=\kappa'(u\lambda(a)+f(\alpha(a))u(\lambda(a')))+\beta'(f\alpha(a)+f\alpha(a'))
	\end{equation*}
	(since $u$ and $f$ are magma homomorphisms, and using (25))
	\begin{equation*}
	=(\kappa'u\lambda(a)+\beta'f\alpha(a))+(\kappa'u\lambda(a')+\beta'f\alpha(a'))
	\end{equation*}
	(by (21) applied to $E'$)
	\begin{equation*}
	=p(a)+p(a')
	\end{equation*}
	(by definition of $p$).
\end{proof}

\begin{remark}
	We could make the last calculation look shorter by using the semidirect product notation. In that notation $p$ would be defined by $p(x,b)=(u(x),f(b))$, and the calculation would become
	\begin{equation*}
	p((x,b)+(x',b'))=p(x+bx',b+b')=(u(x+bx'),f(b+b'))
	\end{equation*}
	\begin{equation*}
	=(u(x)+f(b)u(x'),f(b)+f(b'))=(u(x),f(b))+(u(x'),f(b'))=p(x,b)+p(x'+b').
	\end{equation*}
\end{remark}

\begin{defi} Let $h:B\times X\to X$ and $h':B'\times X'\to X'$ be actions of magmas, $B$ on $X$ and $B'$ on $X'$, respectively. A morphism $(B,X,h)\to(B',X',h')$ is a pair $(f,u)$ such that:
	\begin{itemize}
		\item [(a)] $f:B\to B'$ and $u:X\to X'$ are magma homomorphisms;
		\item [(b)] the diagram
		\begin{equation}
		\xymatrix{ B \times X \ar[d]_{f \times u} \ar[r]^-h & X \ar[d]^u \\
			B' \times X' \ar[r]_-{h'} & X'
		}
		\end{equation}
		commutes, or, equivalently, in the notation of Lemma 2.3, the equality $(24)$ holds for all $b\in B$ and $x\in X$. The resulting category of actions of magmas will be denoted by $\mathsf{Act}$.
	\end{itemize}
\end{defi}

From Lemma 2.5 and previous results, we obtain:
\begin{teo} There is an equivalence between the category $\mathsf{SplExt}$ of split extensions of magmas and the category $\mathsf{Act}$ of actions of magmas, constructed as follows:
	\begin{itemize}
		\item [(a)] the functor $\mathsf{SplExt}\to \mathsf{Act}$ carries a morphism
		\begin{equation*}
		(f,u,p):(B,X,A,\alpha,\beta,\kappa,\lambda)\to(B',X',A',\alpha',\beta',\kappa',\lambda')
		\end{equation*}
		of split extensions to the morphism $(f,u):(B,X,h)\to(B',X',h')$ of the associated actions;
		\item [(b)] the functor $\mathsf{Act}\to \mathsf{SplExt}$ carries a morphism
		\begin{equation*}
		(f,u):(B,X,h)\to(B',X',h')
		\end{equation*}
		of actions to the corresponding morphism $(f,u,p)$ between semidirect product diagrams seen as split extensions, where $p$ is defined by
		\begin{equation*}
		p(x,b)=(u(x),f(b)).
		\end{equation*}
	\end{itemize}
\end{teo}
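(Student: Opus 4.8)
The plan is to establish the equivalence by checking that the two assignments described in (a) and (b) are well-defined functors and that they are mutually quasi-inverse. The heavy lifting has essentially already been done in the preceding lemmas, so the proof should mostly consist of assembling them.

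First I would verify that the assignment in (a) is a functor. On objects it sends a split extension $E=(B,X,A,\alpha,\beta,\kappa,\lambda)$ to $(B,X,h)$ with $h$ the associated action of Definition 1.5, which is an action by the first equalities of (11) and (12) together with $\alpha\kappa=0$ (giving $0x=\lambda\beta(0\cdot\text{stuff})$... more precisely $0x=\lambda(\beta(0)+\kappa(x))=\lambda\kappa(x)=x$ and $b0=\lambda(\beta(b)+\kappa(0))=\lambda\beta(b)=0$). On morphisms, given $(f,u,p):E\to E'$, Lemma 2.3 shows that $(f,u)$ satisfies (24), hence is a morphism of actions; functoriality (preservation of identities and composition) is immediate since everything is defined componentwise. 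Next I would verify that (b) is a functor: given a morphism of actions $(f,u)$, Lemma 2.5, specifically the implication (c)$\Rightarrow$(b), guarantees that $p(x,b)=(u(x),f(b))$ is the unique magma homomorphism making $(f,u,p)$ a morphism of split extensions between the semidirect product diagrams; uniqueness also gives functoriality for free.

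Then I would check the two composites. Starting from an action $(B,X,h)$, applying (b) yields the semidirect product diagram, and applying (a) returns the action associated to that diagram; by the computation at the end of Remark 2.4 this associated action is again $h$, so $\mathsf{Act}\to\mathsf{SplExt}\to\mathsf{Act}$ is literally the identity on objects, and it is the identity on morphisms since both functors act as $(f,u)\mapsto(f,u,\cdot)\mapsto(f,u)$. Starting from a split extension $E$, applying (a) then (b) produces the semidirect product diagram built from the associated action; by Lemmas 1.6, 1.8, and 1.10 (as summarised in Remark 2.4) the triple $(1_B,1_X,\varphi)$ is an isomorphism $E\to$ (that semidirect product diagram), with inverse $(1_B,1_X,\psi)$. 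I would then note that this family of isomorphisms is natural: for a morphism $(f,u,p):E\to E'$, naturality of $\varphi$ amounts to the equality $\varphi'p=p''\varphi$ where $p''(x,b)=(u(x),f(b))$, and this follows by a direct check using $\varphi(a)=(\lambda(a),\alpha(a))$ and the commutativity relations (23), namely $\varphi'p(a)=(\lambda'p(a),\alpha'p(a))=(u\lambda(a),f\alpha(a))=p''(\lambda(a),\alpha(a))=p''\varphi(a)$.

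I do not expect a serious obstacle: every nontrivial ingredient — that $h$ is an action, that $p$ is a well-defined homomorphism, that $\varphi,\psi$ are mutually inverse isomorphisms, that the round trip on actions is the identity — is already isolated in a lemma or remark above. The only point requiring a small additional remark is the naturality of the isomorphism $E\cong$ (semidirect product of its associated action), but as indicated this reduces to the one-line computation with (23) displayed above. Thus the proof is essentially a citation of Lemmas 1.6, 1.8, 1.10, 2.3, 2.5 and Remark 2.4, organised into the standard argument that an essentially surjective, fully faithful-on-the-nose pair of functors with the identity as one composite and a natural isomorphism as the other constitutes an equivalence.
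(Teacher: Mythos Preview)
Your proposal is correct and is exactly the approach the paper takes: in fact the paper gives no proof beyond the single sentence ``From Lemma 2.5 and previous results, we obtain:'', and what you have written is precisely the intended unpacking of that sentence, citing Lemmas 1.3, 1.6, 1.8, 1.10, 2.3, 2.5 and Remark 2.4 at the appropriate places. The only addition you supply beyond the paper is the explicit one-line verification of naturality via (23), which is straightforward and correct.
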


\section{Composition of split extensions}

Consider a diagram of the form
\begin{equation}\label{canonical-iso}
\xymatrix@= 30pt{& Z\ar@<-0.5ex>[d]_{\mu'} \ar[r]_-{\alpha'} & Y \ar@<-0.5ex>[d]_{\mu}\ar@<-1ex>[l]_-{\beta'}\\
	X \ar[r]_-{\kappa}   & A \ar@<-0.5ex>[u]_{\nu'} \ar[r]_-{\alpha} \ar@<-1ex>[l]_-{\lambda} & B \ar@<-0.5ex>[u]_{\nu} \ar@<-1ex>[l]_-{\beta} \ar[r]_-{\gamma} & D \ar@<-1ex>[l]_-{\delta} }
\end{equation}
in which:
\begin{itemize}
	\item $(B,X,A,\alpha,\beta,\kappa,\lambda)$ and $(D,Y,B,\gamma,\delta,\mu,\nu)$ are split extensions of magmas;
	\item $Z=\alpha^{-1}(Y)$ with $\mu'$ being the inclusion map and $\alpha'$ and $\beta'$ being induced by $\alpha$ and $\beta$, respectively;
	\item $\nu'$ is defined by $\nu'(a)=\kappa\lambda(a)+\beta\mu\nu\alpha(a)$.
\end{itemize}
\begin{defi}
	In the notation above, with $E=(B,X,A,\alpha,\beta,\kappa,\lambda)$ and $F=(D,Y,B,\gamma,\delta,\mu,\nu)$, we will say that the split extensions $F$ and $E$ compose and call $(F,E)$ a composable pair of split extensions, if the seven-tuple $G=(D,Z,A,\gamma\alpha,\beta\delta,\\
	\mu',\nu')$ is a split extension (of magmas). If it is the case, we shall write $G=FE$ and call $G$ the composite of $F$ and $E$.
\end{defi}

Thanks to Theorem 2.8, diagram (27) can be identified, up to isomorphism, with
\begin{equation}\label{canonical-iso}
\xymatrix@= 30pt{& X\rtimes(Y\times \{0\})\ar@<-0.5ex>[d]_{\mu'} \ar[r]_-{\alpha'} & Y \ar@<-0.5ex>[d]_{\mu}\ar@<-1ex>[l]_-{\beta'}\\
	X \ar[r]_-{\kappa}   & X\rtimes(Y\rtimes D) \ar@<-0.5ex>[u]_{\nu'} \ar[r]_-{\alpha} \ar@<-1ex>[l]_-{\lambda} & Y\rtimes D \ar@<-0.5ex>[u]_{\nu} \ar@<-1ex>[l]_-{\beta} \ar[r]_-{\gamma} & D \ar@<-1ex>[l]_-{\delta} }
\end{equation}
where:
\begin{itemize}
	\item we assume that $D$ acts on $Y$ and $Y\rtimes D$ act on $X$, that is, some actions of $D$ on $Y$ and of $Y\rtimes D$ on $X$ are given;
	\item since $Y\times \{0\}$ is a submagma of $Y\rtimes D$, the action of $Y\rtimes D$ on $X$ induces the action of $Y\rtimes \{0\}$ on $X$;
	\item the maps involved are defined in the straightforward way, that is, by
	\begin{equation*}
	\alpha(x,(y,d))=(y,d),\,\,\beta(y,d)=(0,(y,d)),\,\,\kappa(x)=(x,(0,0)),\,\,\lambda(x,(y,d))=x,
	\end{equation*}
	\begin{equation*}
	\gamma(y,d)=d,\,\,\delta(d)=(0,d),\,\,\mu(y)=(y,0),\,\,\nu(y,d)=y,
	\end{equation*}
	with $\mu'$ being the inclusion map, $\alpha'$ and $\beta'$ induced by $\alpha$ and $\beta$, respectively, and
	\begin{equation*}
	\nu'(x,(y,d))=\kappa\lambda(x,(y,d))+\beta\mu\nu\alpha(x,(y,d))=(x,(0,0))+(0,(y,0))=(x,(y,0)).
	\end{equation*}
\end{itemize}

Accordingly, the seven-tuple $(D,Z,A,\gamma\alpha,\beta\delta,\mu',\nu')$ becomes now $(D,X\rtimes(Y\times \{0\}),X\rtimes(Y\times D),\gamma\alpha,\beta\delta,\mu',\nu')$.
\begin{lem}
	The seven-tuple $(D,X\rtimes(Y\times \{0\}),X\rtimes(Y\times D),\gamma\alpha,\beta\delta,\mu',\nu')$ satisfies the counterparts of equalities $(11)$-$(14)$, $(16)$, and $(17)$.
\end{lem}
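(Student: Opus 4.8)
The plan is to carry out the verification entirely inside the explicit description of diagram $(29)$. Write a general element of $A=X\rtimes(Y\rtimes D)$ as $(x,(y,d))$ with $x\in X$, $y\in Y$, $d\in D$, and a general element of $Z=X\rtimes(Y\times\{0\})$ as $(x,(y,0))$; recall the addition rules $(x,(y,d))+(x',(y',d'))=\bigl(x+(y,d)x',\,(y,d)+(y',d')\bigr)$ in $A$ and $(y,d)+(y',d')=(y+dy',\,d+d')$ in $Y\rtimes D$, and that both actions in play satisfy the two axioms of Definition 1.1 (acting by $0$ is the identity, acting on $0$ gives $0$). Together with the explicit formulas $\mu'(x,(y,0))=(x,(y,0))$, $(\beta\delta)(d)=(0,(0,d))$, $(\gamma\alpha)(x,(y,d))=d$ and $\nu'(x,(y,d))=(x,(y,0))$, each of the six required equalities then becomes a matter of substituting and simplifying.

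First I would record the one computation that does most of the work: for $\xi=(x,(y,0))\in Z$ and $d\in D$, using that $(y,0)$ acts on $0\in X$ as $0$ and that $0$ acts on $0\in Y$ as $0$, one gets $\mu'(\xi)+(\beta\delta)(d)=(x,(y,0))+(0,(0,d))=(x,(y,d))$. Applying $\nu'$ returns $\xi$, which is the counterpart of $(14)$; and the counterpart of $(13)$, namely $\mu'\nu'+(\beta\delta)(\gamma\alpha)=1_A$, is the same identity applied with $\xi=\nu'(\zeta)$ and $d=(\gamma\alpha)(\zeta)$ for a general $\zeta=(x,(y,d))\in A$, since then $(x,(y,d))=\zeta$. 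The counterparts of $(11)$ and $(12)$ can be read off at once from the defining formulas of the four maps, and the structural requirements of Definition 1.4(a) are clear ($\gamma\alpha$ and $\beta\delta$ are composites of magma homomorphisms, $\mu'$ is the inclusion of a submagma, $\nu'$ preserves zero).

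Next I would treat the two partial-associativity equalities $(16)$ and $(17)$, expanding both sides of each with the two addition rules and simplifying with the action axioms (in particular: adding $(\beta\delta)(d)=(0,(0,d))$ on the right leaves both coordinates of the other summand untouched, because an element of $Y\rtimes D$ acting on $0$ is $0$ and an element of $D$ acting on $0$ is $0$). Writing $\xi=(x,(y,0))\in Z$, $\zeta=(x',(y',d'))\in A$, $d\in D$, one finds that both sides of the counterpart of $(16)$ equal $\bigl(x+(y,0)x',\,(y+y',\,d'+d)\bigr)$ and both sides of the counterpart of $(17)$ equal $\bigl(x'+(y',d')x,\,(y'+d'y,\,d'+d)\bigr)$. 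There is no real obstacle here; the one thing to be careful about is not mistaking $0\cdot y$ for $0$ (the first action axiom makes it $y$), which is why I would write the two axioms out before starting.

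Finally, the point worth flagging — and the reason the counterpart of $(15)$ is deliberately absent from the statement — is that running the identical expansion for $(15)$ does not close up: with $\xi$, $\zeta$, $d$ as above, the left-hand side has $X$-coordinate $x+(y,0)\bigl((0,d)x'\bigr)$ and the right-hand side has $x+(y,d)x'$ (the $Y$- and $D$-coordinates do now agree, both equal to $(y+dy',\,d+d')$). These match for all $x',y,d$ precisely when the action of $Y\rtimes D$ on $X$ satisfies $(y,0)\bigl((0,d)x'\bigr)=(y,d)x'$, that is, a partial associativity of that action relative to the splitting $(y,d)=(y,0)+(0,d)$. Isolating this single failing coordinate is really the content of the lemma: it is exactly the extra condition on actions that the ``nicer'' classes of split extensions introduced below will be built to impose, and the rest of the argument is mechanical substitution.
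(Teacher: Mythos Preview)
Your proof is correct and follows essentially the same approach as the paper: direct elementwise verification in the explicit semidirect-product model, with the key identity $(x,(y,0))+(0,(0,d))=(x,(y,d))$ doing the work for the counterparts of $(13)$ and $(14)$, and the two partial-associativity laws checked by expanding both sides. Your final paragraph on why the counterpart of $(15)$ fails is not part of this lemma but exactly anticipates the paper's Lemma~3.3; note also that the explicit diagram you mean is $(28)$, not $(29)$.
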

\begin{proof}
These counterparts are, respectively:
\begin{equation}
    \nu'\mu'=1,\,\,\gamma\alpha\beta\delta=1,
\end{equation}
\begin{equation}
\nu'\beta\delta=0,\,\,\gamma\alpha\mu'=0,
\end{equation}
\begin{equation}
\mu'\nu'+\beta\delta\gamma\alpha=1,
\end{equation}
\begin{equation}
\nu'(\mu'(x,(y,0))+\beta\delta(d))=(x,(y,0)),
\end{equation}
\begin{equation}
\mu'(x',(y',0))+((x,(y,d))+\beta\delta(d'))=(\mu'(x',(y',0))+(x,(y,d)))+\beta\delta(d'),
\end{equation}
\begin{equation}
(x,(y,d))+(\mu'(x',(y',0))+\beta\delta(d'))=((x,(y,d))+\mu'(x',(y',0)))+\beta\delta(d'),
\end{equation}
required for all $x$ and $x'$ in $X$, $y$ and $y'$ in $Y$, and $d$ and $d'$ in $D$. Here (29) and (30) are obvious, and we have:
\begin{equation*}
(31):\,\,(\mu'\nu'+\beta\delta\gamma\alpha)(x,(y,d))=(x,(y,0))+(0,(0,d))=(x,(y,d)),
\end{equation*}
\begin{equation*}
(32):\,\,\nu'(\mu'(x,(y,0))+\beta\delta(d))=\nu'((x,(y,0))+(0,(0,d)))=\nu'(x,(y,d))=(x,(y,0)),
\end{equation*}
\begin{equation*}
(33):\,\,\mu'(x',(y',0))+((x,(y,d))+\beta\delta(d'))=(x',(y',0))+((x,(y,d))+(0,(0,d'))
\end{equation*}
\begin{equation*}
=(x',(y',0))+((x,(y,d+d'))=(x'+(y',0)x,(y'+y,d+d'))
\end{equation*}
\begin{equation*}
=(x'+(y',0)x,(y'+y,d))+(0,(0,d'))=((x',(y',0))+(x,(y,d)))+(0,(0,d'))
\end{equation*}
\begin{equation*}
=(\mu'(x',(y',0))+(x,(y,d)))+\beta\delta(d'),
\end{equation*}
\begin{equation*}
(34):\,\,(x,(y,d))+(\mu'(x',(y',0))+\beta\delta(d'))=(x,(y,d))+((x',(y',0))+(0,(0,d')))
\end{equation*}
\begin{equation*}
=(x,(y,d))+(x',(y',d'))=(x+(y,d)x',(y+dy',d+d'))
\end{equation*}
\begin{equation*}
=(x+(y,d)x',(y+dy',d))+(0,(0,d'))=((x,(y,d))+(x',(y',0)))+(0,(0,d'))
\end{equation*}
\begin{equation*}
=((x,(y,d))+\mu'(x',(y',0)))+\beta\delta(d').
\end{equation*}
\end{proof}
\begin{lem}
	The following conditions are equivalent:
	\begin{itemize}
		\item [(a)] $(D,X\rtimes(Y\times \{0\}),X\rtimes(Y\times D),\gamma\alpha,\beta\delta,\mu',\nu')$ is a split extension (of magmas);
		\item [(b)] $(D,X\rtimes(Y\times \{0\}),X\rtimes(Y\times D),\gamma\alpha,\beta\delta,\mu',\nu')$ satisfies the counterpart of equality $(15)$;
		\item [(c)] the equality
		\begin{equation}
		(y,0)((0,d)x)=(y,d)x
		\end{equation}
		holds for all $x$ in $X$, $y$ in $Y$, and $d$ in $D$.
	\end{itemize}
\end{lem}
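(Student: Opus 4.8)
The plan is to establish the chain (a)$\Leftrightarrow$(b)$\Leftrightarrow$(c), using Lemma 3.2 for the first equivalence and a direct coordinate computation inside the iterated semidirect product for the second.

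\emph{(a)$\Leftrightarrow$(b).} First I would note that the seven-tuple in question already satisfies the non-equational requirements of Definition 1.4(a): $\gamma\alpha$ and $\beta\delta$ are magma homomorphisms, being composites of the homomorphisms occurring in diagram (28); $\mu'$ is a magma homomorphism, being the inclusion of a submagma; and $\nu'$ preserves zero, since $\nu'(x,(y,d))=(x,(y,0))$. By Lemma 3.2 the seven-tuple also satisfies the counterparts of (11)--(14), (16) and (17). Hence, by Definition 1.4, it is a split extension of magmas if and only if it additionally satisfies the counterpart of (15), and that is precisely condition (b).

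\emph{(b)$\Leftrightarrow$(c).} Here I would write the counterpart of (15) out in coordinates. A typical element of the kernel $X\rtimes(Y\times\{0\})$ has the form $(x,(y,0))$, one has $\beta\delta(d)=(0,(0,d))$, and a typical element of $X\rtimes(Y\rtimes D)$ has the form $(x',(y',d'))$; so the counterpart of (15) reads
\begin{equation*}
(x,(y,0))+\big((0,(0,d))+(x',(y',d'))\big)=\big((x,(y,0))+(0,(0,d))\big)+(x',(y',d'))
\end{equation*}
for all $x,x'\in X$, $y,y'\in Y$, $d,d'\in D$. Unwinding the semidirect-product addition $(x_1,w_1)+(x_2,w_2)=(x_1+w_1x_2,\,w_1+w_2)$ together with the action axioms (1), the left-hand side reduces to $\big(x+(y,0)((0,d)x'),\,(y+dy',\,d+d')\big)$ and the right-hand side to $\big(x+(y,d)x',\,(y+dy',\,d+d')\big)$. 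The $Y\rtimes D$-components of the two sides already coincide, so the displayed identity holds for all admissible arguments if and only if $x+(y,0)((0,d)x')=x+(y,d)x'$ holds for all $x,x'\in X$, $y\in Y$, $d\in D$. Specialising to $x=0$ yields exactly (36), after renaming $x'$ as $x$; conversely (36) yields the first-component identity for arbitrary $x$ by adding $x$ on the left. Thus (b)$\Leftrightarrow$(c), completing the argument.

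I do not expect a genuine obstacle here: the whole proof is bookkeeping inside the twofold semidirect product $X\rtimes(Y\rtimes D)$. The two points that need care are, first, that magmas are not cancellative, so the reduction from ``$x+u=x+v$ for all $x$'' to ``$u=v$'' must pass through the substitution $x=0$ (with the converse obtained by re-adding $x$); and second, keeping the three relevant actions apart, namely the action of $D$ on $Y$, the action of $Y\rtimes D$ on $X$, and the action of the submagma $Y\times\{0\}$ on $X$ induced from the latter, which is what makes $X\rtimes(Y\times\{0\})$ a submagma of $X\rtimes(Y\rtimes D)$.
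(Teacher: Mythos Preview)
Your proof is correct and follows essentially the same route as the paper: the equivalence (a)$\Leftrightarrow$(b) is reduced to Definition~1.4 together with Lemma~3.2, and (b)$\Leftrightarrow$(c) is obtained by expanding both sides of the counterpart of (15) in coordinates and comparing first components. The only difference is that you spell out the specialisation $x=0$ (and its converse) explicitly, which the paper leaves implicit.
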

\begin{proof}
	(a)$\Leftrightarrow$(b) follows from Definition 1.4 and Lemma 3.2.
	
	(b)$\Leftrightarrow$(c):
	To say that $(D,X\rtimes(Y\times \{0\}),X\rtimes(Y\times D),\gamma\alpha,\beta\delta,\mu',\nu')$ satisfies the counterpart of equality $(15)$ is to say that
	\begin{equation}
	\mu'(x,(y,0))+(\beta\delta(d)+(x',(y',d')))=(\mu'(x,(y,0))+\beta\delta(d))+(x',(y',d')),
	\end{equation}
	or, equivalently,
	\begin{equation*}
	(x,(y,0))+((0,(0,d))+(x',(y',d')))=((x,(y,0))+(0,(0,d)))+(x',(y',d')),
	\end{equation*}
	holds for all $x$ and $x'$ in $X$, $y$ and $y'$ in $Y$, and $d$ and $d'$ in $D$. We have
	\begin{equation*}
	(x,(y,0))+((0,(0,d))+(x',(y',d')))=(x,(y,0))+((0,d)x',(dy',d+d'))
	\end{equation*}
	\begin{equation*}
	=(x+(y,0)((0,d)x'),(y+dy',d+d'))
	\end{equation*}
	and
	\begin{equation*}
	((x,(y,0))+(0,(0,d)))+(x',(y',d'))=(x,(y,d))+(x',(y',d'))
	\end{equation*}
	\begin{equation*}
	=(x+(y,d)x',(y+dy',d+d')),
	\end{equation*}
	and these two expressions are equal for all $x$ and $x'$ in $X$, $y$ and $y'$ in $Y$, and $d$ and $d'$ in $D$ if and only if $(y,0)((0,d)x)=(y,d)x$ for all $x$ in $X$, $y$ in $Y$, and $d$ in $D$.
\end{proof}

Translating Lemma 3.3 into the context of general split extensions with the notation of Definition 3.1, we obtain:
\begin{cor}
	Given split extensions $E=(B,X,A,\alpha,\beta,\kappa,\lambda)$ and $F=(D,Y,B,\gamma,\\
	\delta,\mu,\nu)$, the following conditions are equivalent:
	\begin{itemize}
		\item [(a)] $(F,E)$ is a composable pair;
		\item [(b)] the equality
		\begin{equation}
		\mu'(z)+(\beta\delta(d)+a)=(\mu'(z)+\beta\delta(d))+a,
		\end{equation}
		holds for all $z\in Z$, $d\in D$, and $a\in A$;
		\item [(c)] the equality
		\begin{equation}
		\mu(y)(\delta(d)x)=(\mu(y)+\delta(d))x,
		\end{equation}
		holds for all $y\in Y$, $d\in D$, and $x\in X$; this formula uses the action $B$ on $X$ given in Definition 1.5.\qed
	\end{itemize}
\end{cor}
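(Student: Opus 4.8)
The plan is to derive Corollary 3.4 as a verbatim rephrasing of Lemma 3.3, using Theorem 2.8 to pass between the abstract split extensions $E$, $F$ and their semidirect-product models. First I would invoke Theorem 2.8 to replace the pair $F=(D,Y,B,\gamma,\delta,\mu,\nu)$, $E=(B,X,A,\alpha,\beta,\kappa,\lambda)$ by the isomorphic diagram (28): here $B$ is identified with $Y\rtimes D$ along the isomorphism $(y,d)\mapsto \mu(y)+\delta(d)$ (Lemmas 1.6 and 1.8 applied to $F$), then $A$ with $X\rtimes(Y\rtimes D)$ along $a\mapsto(\lambda(a),\ldots)$, and $Z=\alpha^{-1}(Y)$ with $X\rtimes(Y\times\{0\})$. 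Under these identifications the seven-tuple $G=(D,Z,A,\gamma\alpha,\beta\delta,\mu',\nu')$ of Definition 3.1 becomes exactly the seven-tuple $(D,X\rtimes(Y\times\{0\}),X\rtimes(Y\times D),\gamma\alpha,\beta\delta,\mu',\nu')$ of Lemma 3.3; in particular $\nu'(a)=\kappa\lambda(a)+\beta\mu\nu\alpha(a)$ becomes $\nu'(x,(y,d))=(x,(y,0))$ as recorded just after (28). Hence ``$(F,E)$ is a composable pair'', i.e. condition (a), is literally condition (a) of Lemma 3.3.

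Next I would read off what conditions (b) and (c) of Lemma 3.3 become. Condition (b) of Lemma 3.3 is the counterpart of (15) for that seven-tuple; since for $G$ the kernel inclusion is $\mu'$ and the role of ``$\beta$'' is played by $\beta\delta$ while the role of ``$a$'' is played by an arbitrary element of $A$, this is $\mu'(z)+(\beta\delta(d)+a)=(\mu'(z)+\beta\delta(d))+a$ for $z\in Z$, $d\in D$, $a\in A$, which is exactly Corollary 3.4(b). For condition (c), I would note that the action of $Y\rtimes D$ on $X$ featuring in (28) is, by Remark 2.4, the transport along $(y,d)\mapsto\mu(y)+\delta(d)$ of the action of $B$ on $X$ from Definition 1.5; consequently $(0,d)x=\delta(d)x$, $(y,0)x=\mu(y)x$ and $(y,d)x=(\mu(y)+\delta(d))x$ in the Definition 1.5 action, so that $(y,0)((0,d)x)=(y,d)x$ of Lemma 3.3(c) becomes $\mu(y)(\delta(d)x)=(\mu(y)+\delta(d))x$, which is Corollary 3.4(c). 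With these three translations in hand, the equivalence (a)$\Leftrightarrow$(b)$\Leftrightarrow$(c) is immediate from Lemma 3.3.

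The only genuinely delicate point is the bookkeeping in the second paragraph: one must confirm that the isomorphism of diagrams produced by Theorem 2.8 really does carry the data $(\kappa,\lambda,\mu',\nu')$ of (27) and the induced action of $Y\rtimes D$ on $X$ to the corresponding data of (28). This, however, is precisely the verification already performed in the computations displayed between (28) and Lemma 3.2 (together with Remark 2.4 for the action), so no new work is required, and the corollary follows by simply restating Lemma 3.3 in the transported notation. \qed
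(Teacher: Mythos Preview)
Your proposal is correct and follows exactly the paper's intended route: the corollary is stated with a bare \qed\ because it is meant as the literal translation of Lemma 3.3 back through the isomorphism of Theorem 2.8, and you have carried out precisely that translation, including the identification of the induced action via Remark 2.4. Nothing further is needed.
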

\begin{defi}
	Let $B$ and $X$ be magmas. An action of $B$ on $X$ will be called firm if
	\begin{equation}
	b'(bx)=(b'+b)x,
	\end{equation}
	for all $b,\,b'\in B$ and $x\in X$. Accordingly, a split extension of magmas will be called firm if its associated action is firm.
\end{defi}

Corollary 3.4 immediately implies
\begin{cor}
	Given split extensions $E=(B,X,A,\alpha,\beta,\kappa,\lambda)$ and $F=(D,Y,B,\gamma,\\
	\delta,\mu,\nu)$, as in diagram $(27)$, the pair $(F,E)$ is composable whenever $E$ is firm.\qed
\end{cor}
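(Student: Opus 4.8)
The plan is to reduce to the criterion already established in Corollary 3.4. By that corollary, the pair $(F,E)$ is composable if and only if condition (c) holds, that is, if and only if
\begin{equation*}
\mu(y)(\delta(d)x)=(\mu(y)+\delta(d))x
\end{equation*}
for all $y\in Y$, $d\in D$, and $x\in X$, where the action of $B$ on $X$ is the one associated to $E$ as in Definition 1.5. So it suffices to verify this single equality under the assumption that $E$ is firm.

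Now I would simply invoke firmness. By Definition 3.5, $E$ being firm means precisely that its associated action satisfies
\begin{equation*}
b'(bx)=(b'+b)x
\end{equation*}
for all $b,b'\in B$ and all $x\in X$. Since $\mu\colon Y\to B$ and $\delta\colon D\to B$ are the structure maps of the split extension $F$, the elements $\mu(y)$ and $\delta(d)$ lie in $B$, and substituting $b'=\mu(y)$ and $b=\delta(d)$ into the firmness identity yields exactly the equality required by Corollary 3.4(c). Hence $(F,E)$ is composable.

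There is essentially no obstacle here: the content is entirely carried by Corollary 3.4, and the only point worth noting explicitly is that the hypothesis of Corollary 3.6 concerns the action associated to $E$ (the ``inner'' extension), which is exactly the action appearing in condition (c), so no compatibility between the two given extensions beyond what is displayed in diagram $(27)$ is needed. I would therefore keep the write-up to a couple of lines, merely pointing to Corollary 3.4 and specialising the firmness equation.
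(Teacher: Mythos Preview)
Your proposal is correct and follows exactly the paper's approach: the paper simply states that Corollary 3.4 immediately implies the result, and you have spelled out the one-line specialisation of the firmness identity $b'(bx)=(b'+b)x$ to $b'=\mu(y)$, $b=\delta(d)$ that makes this implication explicit. There is nothing to add.
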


Furthermore, we have:
\begin{teo}
	If the split extensions $E=(B,X,A,\alpha,\beta,\kappa,\lambda)$ and $F=(D,Y,B,\gamma,\\
	\delta,\mu,\nu)$, as in diagram $(27)$, are firm, then so is their composite $FE= (D,Z,A,\gamma\alpha,\\
	\beta\delta,\mu',\nu')$.
\end{teo}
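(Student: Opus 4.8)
The plan is to reduce everything to the explicit semidirect-product picture and then compute the associated action of $FE$ directly. First note that, since $E$ is firm, Corollary 3.6 guarantees that $(F,E)$ is a composable pair, so $FE=(D,Z,A,\gamma\alpha,\beta\delta,\mu',\nu')$ really is a split extension and the statement makes sense. Firmness of a split extension is a property of its associated action only, and isomorphic split extensions have corresponding associated actions (Lemma 2.3 and Remark 2.4); so by Theorem 2.8 we may replace $FE$ by the isomorphic concrete split extension (28). There $Z=X\rtimes(Y\times\{0\})$, $A=X\rtimes(Y\rtimes D)$, $\mu'$ is the inclusion, $\beta\delta(d)=(0,(0,d))$, and $\nu'(x,(y,d))=(x,(y,0))$, while the data consist of an action of $D$ on $Y$ and an action of $Y\rtimes D$ on $X$ — namely the actions associated with $F$ and $E$, which are firm by hypothesis.

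\textbf{Step 1.} I would compute the action of $D$ on $Z$ associated with $FE$. By Definition 1.5 it is $d\ast z=\nu'(\beta\delta(d)+\mu'(z))$. Unwinding the additions of the semidirect products $X\rtimes(Y\rtimes D)$ and $Y\rtimes D$ and then applying $\nu'$, I expect to land on the closed form
\[
d\ast(x,(y,0))=\big((0,d)x,\ (dy,0)\big),
\]
where $(0,d)x$ is the value of the action of $(0,d)\in Y\rtimes D$ on $x\in X$ (coming from $E$) and $dy$ is the value of the action of $D$ on $Y$ (coming from $F$). I would also record the small identity $(0,d')+(0,d)=(0,d'+d)$ in $Y\rtimes D$, valid because $d'\cdot 0=0$.

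\textbf{Step 2.} With the closed form in hand, firmness of $FE$ is checked directly. Firmness of $E$, applied to the two elements $(0,d'),(0,d)\in Y\rtimes D$ and combined with $(0,d')+(0,d)=(0,d'+d)$, gives $(0,d')\big((0,d)x\big)=(0,d'+d)x$; firmness of $F$ gives $d'(dy)=(d'+d)y$. Substituting into Step 1 twice then produces
\[
d'\ast\big(d\ast(x,(y,0))\big)=\big((0,d')((0,d)x),\ (d'(dy),0)\big)=\big((0,d'+d)x,\ ((d'+d)y,0)\big)=(d'+d)\ast(x,(y,0)),
\]
which is exactly the firmness identity (39) for the action associated with $FE$; hence $FE$ is firm.

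\textbf{Main obstacle.} There is no genuine obstacle, only bookkeeping: the step that needs care is the unwinding in Step 1 yielding the closed form for $d\ast(x,(y,0))$, together with the observation that firmness of $E$ is needed only for elements of $\{0\}\times D\subseteq Y\rtimes D$ and that the sum of two such elements stays in $\{0\}\times D$ — this is what makes the computation close up. A completely abstract alternative, avoiding diagram (28), is to use $\nu'(a)=\kappa\lambda(a)+\beta\mu\nu\alpha(a)$ together with Lemma 1.7 and equalities (14)--(17) to derive the analogous formula $d\ast z=\kappa\big(\delta(d)\lambda(z)\big)+\beta\mu\big(d\cdot\nu\alpha(z)\big)$ and then argue as above; this is essentially the same proof with more equation-chasing.
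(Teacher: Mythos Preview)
Your proposal is correct and follows essentially the same approach as the paper: pass to the concrete semidirect-product model (28), compute the associated $D$-action on $X\rtimes(Y\times\{0\})$ via Definition~1.5 to get $d\ast(x,(y,0))=((0,d)x,(dy,0))$, and then verify firmness using firmness of $E$ on elements of $\{0\}\times D$ together with firmness of $F$. The paper runs this as a single chain of equalities rather than isolating the closed form first, but the content is identical.
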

\begin{proof}
	Again, without loss of generality, we can replace diagram (27) with diagram (28), and then we will have to prove that
	\begin{equation}
		d'(d(x,(y,0)))=(d'+d)(x,(y,0)),
	\end{equation}
	for all $d$ and $d'$ in $D$, $x$ in $X$, and $y$ in $Y$. Using Definition 1.5 and the fact that $E$ and $F$ are firm, we calculate:
	\begin{eqnarray*}
	d'(d(x,(y,0)))& = & d'(\nu'(\beta\delta(d)+\mu'(x,(y,0)))) \\ &=& d'(\nu'((0,(0,d))+(x,(y,0)))) \\
	& = & d'(\nu'((0,d)x,(dy,d))) \\ &=& d'((0,d)x,(dy,0)) \\ &=& ((0,d')((0,d)x),(d'(dy),0)) \\
	& = & (((0,d)+(0,d'))x,((d'+d)y,0))\\ &=&((0,d+d')x,((d'+d)y,0))\\ &=& (d'+d)(x,(y,0))
	\end{eqnarray*}
\end{proof}

\section{Conclusions and additional remarks}

\subsection{Cotranslations = Cartesian liftings = Pullbacks} Given a split extension $E=(B,X,A,\alpha,\beta,\kappa,\lambda)$ (of magmas) and a magma homomorphism $f:B'\to B$ (we could also consider the case where $f$ merely preserves zero, but let us omit that), we can construct, uniquely up to isomorphism, a diagram of the form
\begin{equation}
\xymatrix@=30pt{
	X  \ar@{=}[d] \ar[r]_-{\kappa'}   & A'  \ar[r]_-{\alpha'} \ar@<-1ex>[l]_-{\lambda'} \ar[d]_{p}  & B' \ar@<-1ex>[l]_-{\beta'}  \ar[d]^f \\X  \ar[r]_-{\kappa}   & A \ar[r]_-{\alpha} \ar@<-1ex>[l]_-{\lambda}   & B, \ar@<-1ex>[l]_-{\beta}
}
\end{equation}
with the top row being a split extension, which we will denote by $E_f$, and $(f,1,p):E_f\to E$ being a morphism in $\mathsf{SplExt}$. Moreover, this diagram will satisfy the usual universal property. This follows from Theorem 2.8 and the fact that the equivalence described there agrees with the forgetful functors to the category of magmas, both defined by $(B,...)\mapsto B$. This construction is, of course, a \textit{cotranslation} of $E$ along $f$ in the sense of Yoneda \cite{[Y]}, which is the same as to say that it is a cartesian lifting of $f$ in the sense of the theory of Grothendieck fibrations. In particular, $\alpha'$ is a pullback of $\alpha$ along $f$. The reason why this is easy is that so is making cartesian liftings for the fibration of actions: given an action $h:B\times X\to X$ and $f$ above, just use the composite $h(f\times X):B'\times X\to X$. This also tells us that $E'$ \textit{is firm whenever so is} $E$. In the language of semidirect products the diagram (41) is (up to canonical isomorphisms) the same as
\begin{equation}
\xymatrix@=30pt{
	X  \ar@{=}[d] \ar[r]_-{\iota_1'}   & X\rtimes B'  \ar[r]_-{\pi_2'} \ar@<-1ex>[l]_-{\pi_1'} \ar[d]_{X\times f}  & B' \ar@<-1ex>[l]_-{\iota_2'}  \ar[d]^f \\X  \ar[r]_-{\iota_1}   & X\rtimes B \ar[r]_-{\pi_2} \ar@<-1ex>[l]_-{\pi_1}   & B, \ar@<-1ex>[l]_-{\iota_2}
}
\end{equation}
in obvious notation, with $B'$ acting on $X$ via $b'x=f(b')x$. Note also, that these cartesian liftings do not change the kernel $X$ and so instead of the equivalence $\mathsf{SplExt}\sim\mathsf{Act}$ of Theorem 2.8 we could use its restriction $\mathsf{SplExt}(-,X)\sim\mathsf{Act}(-,X)$ on split extensions and actions with the fixed ``$X$ part''.
\subsection{Translations = Cocartesian liftings = ``Pushforwards''} It is well known, already for ordinary split extensions of groups, that the functors associating to split extensions their kernels are neither fibrations nor opfibrations. However (which is also well known for groups), as follows from Theorem 2.8, the forgetful functor $\mathsf{SplExt}(B,-)\to \mathsf{Act}(B,-)$ is a category equivalence. This gives us trivial cocartesian and trivial cartesian liftings of morphisms in $\mathsf{Act}(B,-)$ with respect to that functor, the same as translations and cotranslations in the sense of Yoneda \cite{[Y]} with respect to the span, which Yoneda would write as $\mathsf{SplExt}(B,-)\to 1\times\mathsf{Act}(B,-)$ (if he would use our notation for the categories involved, with $1$ being a terminal category). In some recent papers similar kinds of translations are called ``pushforwards''(e.g.
\cite{[CMM]}, \cite{[MMS2]}). There is also an obvious relationship between translations here and cotranslations in the sense of 4.1, which we shall not describe since it copies what is well known for groups.
\subsection{What are `good' split epimorphisms of magmas?} One might think of three such classes:
\begin{itemize}
	\item The class $\mathcal{E}$ of all magma homomorphisms $\alpha$, for which there exists a split extension $E=(B,X,A,\alpha,\beta,\kappa,\lambda)$ with same $\alpha$.
	\item The class $\mathcal{E}'$ defined in the same way except that $E$ is required to be firm in the sense of Definition 3.5.
	\item The class $\mathcal{E}''$ defined in the same way except that $E$ is required not only to be firm, but also to satisfy the equality
	\begin{equation}
	b(x+x')=bx+bx',
	\end{equation}
	for all $b\in B$ and $x,\,x'\in X$.
\end{itemize}
We observe:
\begin{itemize}
	\item [(a)] As in fact explained in Subsection 4.1, Theorem 2.8 implies that the classes $\mathcal{E}$ and $\mathcal{E}'$ are pullback stable. The same is true for $\mathcal{E}''$: for, note that, in the notation of 4.1, if the action $h:B\times X\to X$ satisfies (43), then the action $h(f\times X):B'\times X\to X$ also does.
	\item [(b)] It is a triviality to construct an example where condition 3.4(c) does not hold. Therefore Corollary 3.4 implies that the class $\mathcal{E}$ is not closed under composition. On the other hand, as follows from Corollary 3.6 and Theorem 3.7, the class $\mathcal{E}'$ is closed under composition.
	\item [(c)] The class $\mathcal{E}''$ is also closed under composition. To prove this, we have to prove that, in the situation of Definition 3.1 with $E,\,F\in \mathcal{E}''$, the action of $D$ on $X\rtimes(Y\times{0})$ associated to $G=FE$ satisfies (43). That is, we need to prove that, under the assumptions above,
	\begin{equation}
	d((x,(y,0))+(x',(y',0)))=d(x,(y,0))+d(x',(y',0))
	\end{equation}
	for all $d$ in $D$, $x$ and $x'$ in $X$, and $y$ and $y'$ in $Y$.
	Going back to the calculation in the proof of Theorem 3.7, and having in mind that
	\begin{equation*}
	(x,(y,0))+(x',(y',0))=(x+(y,0)x',(y+y',0)),
	\end{equation*}
	we see that to prove (44) is to prove
	\begin{equation}
	((0,d)(x+(y,0)x'),(d(y+y'),0))=((0,d)x,(dy,0))+((0,d)x',(dy',0)).
	\end{equation}
	We have
	\begin{eqnarray*}
	((0,d)x,(dy,0))+((0,d)x',(dy',0))& = & ((0,d)x+(dy,0)((0,d)x'),(dy+dy',0)) \\
	& = & ((0,d)x+((dy,0)+(0,d))x',(d(y+y'),0))\\ &=& ((0,d)x+(dy,d)x',(d(y+y'),0)) \\
	& = & ((0,d)x+((0,d)+(y,0))x',(d(y+y'),0)) \\  & = & ((0,d)x+(0,d)((y,0)x'),(d(y+y'),0)) \\
	& = &((0,d)(x+(y,0)x'),(d(y+y'),0)),
	\end{eqnarray*}
	as desired. Note that this calculation used (39), which suggests that requiring (43) without (39) would be less natural than requiring (39) without (43).
\end{itemize}
\subsection{Split Short Five Lemma}
In our context the Split Short Five Lemma should say that, in diagram (22), $p$ is an isomorphism whenever so are $f$ and $u$. If one uses this formulation, then the Split Short Five Lemma immediately follows from Theorem 2.8. But what if we make the assumptions on $p$ weaker omitting some of the equalities in (23)? Thanks to Theorem 2.8, this question is equivalent to the following one:
let
\begin{equation}\label{canonical-iso}
\xymatrix@=30pt{
	X  \ar@{=} [d] \ar[r]_-{\iota_1}   & X \rtimes B  \ar[r]_-{\pi_2} \ar@<-1ex>[l]_-{\pi_1} \ar[d]_p  & B \ar@<-1ex>[l]_-{\iota_2}  \ar@{=}[d] \\X  \ar[r]_-{\iota_1}   & X \rtimes B \ar[r]_-{\pi_2} \ar@<-1ex>[l]_-{\pi_1} & B \ar@<-1ex>[l]_-{\iota_2}
}
\end{equation}
be a diagram with the rows being semidirect diagrams of magmas (although the actions of $B$ on $X$ in them might be different from each other), and $p$ being a magma homomorphism; which of the equalities (23) do we need to prove that $p$ is an isomorphism?

Let us consider three cases:
\begin{itemize}
	\item [(a)] Suppose $\pi_1p=\pi_1$ and $\pi_2p=\pi_2$. This case is trivial: by the universal property of the bottom product diagram $p$ becomes a bijection (in fact the identity map) of the underlying sets, and since it was required to be a magma homomorphism, this makes it an isomorphism.
	\item [(b)] Suppose $p\iota_1=\iota_1$ and $p\iota_2=\iota_2$. This case is ``next to trivial'': we have $p(x,b)=p((x,0)+(0,b))=p(x,0)+p(0,b)=p\iota_1(x)+p\iota_2(b)=\iota_1(x)+\iota_2(b)$\\$=(x,0)+(0,b)=(x,b)$, and so $p$ is an isomorphism again.
	\item [(c)] Suppose $p\iota_1=\iota_1$ and $\pi_2p=\pi_2$. When $X$ is a commutative monoid and both actions of $B$ on $X$ are trivial, for any homomorphism $s:B\to X$, one could define $p$ by $p(x,b)=(x+s(b),b)$. It is then very easy to construct an example where such $p$ is not an isomorphism.
\end{itemize}

\subsection{Involving additional operations}
One could try to repeat our story involving additional operations on magmas, satisfying rather strong equational axioms. One seemingly straightforward way to do it is to follow Martins-Ferreira, Montoli, and Sobral \cite{[MMS]}, where the axioms are monoid counterparts of Porter's axioms for what he called ``groups with operations'' \cite{[P]}.

\subsection{``Internalization''} All definitions, constructions, and results of Sections 2 and 3, and everything we already said in this Section (except 4.4(c)) can be extended from ordinary magmas to internal magmas in a category $\mathcal{C}$ via the Yoneda embedding. Doing so it is better to assume, for simplicity, that $\mathcal{C}$ has finite products, although even that could be avoided (cf. the last paragraph of Section III.6 in \cite{[M]}). The reason for such assumption is that is preferable to have semidirect products of magmas inside $\mathcal{C}$. Let us also point out that constructing diagrams (27) and (28) internally in $\mathcal{C}$, obviously, one should:
\begin{itemize}
	\item Define $Z$, $\mu'$, and $\alpha'$ in (27) by requiring the square formed by them and $\alpha$ and $\mu$ to be a pullback square, and then make other replacements accordingly.
	\item Constructing diagram (28), define the actions of $Y$ on $X$ using the the action of $Y\rtimes D$ on $X$ and the morphism $\langle1,0\rangle:Y\to Y\rtimes D$; after that it is more convenient to write $X\rtimes Y$ instead of $X\rtimes(Y\times\{0\})$ and, again, make other replacements accordingly.
\end{itemize}

\subsection{Monoids and groups} Almost every word we say becomes well known when we replace magmas with monoids (see \cite{[BMMS]} and references therein). In particular, in the case of monoids we have:
\begin{itemize}
	\item The equalities (15)-(17) in Definition 1.4 hold automatically. The same is true for (39) and (43), and so all split extensions are firm, and the monoid counterparts of the classes $\mathcal{E}$, $\mathcal{E}'$, and $\mathcal{E}''$ of Subsection 4.3 coincide with each other.
	\item The split extensions in the sense of Definition 1.4 become nothing but Schreier split sequences in the sense of \cite{[BMMS]}.
\end{itemize}
In the classical case of groups, more ingredients of Definition 1.4 are redundant, and whole its data reduces to giving an arbitrary split epimorphism together with its splitting and kernel. The group-theoretic case of our Theorem 2.8 is nothing but a categorical formulation of what is well known in homological algebra as a first step towards cohomological description on group extensions.

\end{document}